\numberwithin{equation}{section}
\newtheorem{theorem}{Theorem}[section]
\newtheorem{lemma}[theorem]{Lemma}
\newtheorem{remark}[theorem]{Remark}
\newtheorem{definition}[theorem]{Definition}
\newtheorem{proposition}[theorem]{Proposition}
\def\XXint#1#2#3{{\setbox0=\hbox{$#1{#2#3}{\int}$ }
\vcenter{\hbox{$#2#3$ }}\kern-.6\wd0}}
\newcommand{\R}{{\mathbb R}}
\newcommand{\bfa}{\mathbf a}
\newcommand{\cL}{{\mathcal L}}
\newcommand{\al}{\alpha}
\newcommand{\de}{\delta}
\newcommand{\e}{\varepsilon}
\newcommand{\la}{\lambda}
\newcommand{\si}{\sigma}
\newcommand{\Si}{\Sigma}
\newcommand{\ti}{\times}
\newcommand{\pa}{\partial}
\newcommand{\qu}{\quad}
\newcommand{\ra}{\rightarrow}
\newcommand{\D}{\nabla}
\newcommand{\De}{\Delta}
\newcommand{\ind}{\operatorname{index}}
\newcommand{\Vol}{\operatorname{Vol}}
\newcommand{\fr}{\frac}
\newcommand{\inn}[2]{\left\langle {#1},{#2} \right\rangle}
\title{Ancient mean curvature flows with finite total curvature}
\author{Kyeongsu Choi}
\address{KC: School of Mathematics, Korea Institute for Advanced Study, 85 Hoegiro, Dongdaemun-gu, Seoul 02455, Republic of Korea}
\email{choiks@kias.re.kr}
\author{Jiuzhou Huang}
\address{JH: School of Mathematics, Korea Institute for Advanced Study, 85 Hoegiro, Dongdaemun-gu, Seoul 02455, Republic of Korea}
\email{jiuzhou@kias.re.kr}
\author{Taehun Lee}
\address{TL: School of Mathematics, Korea Institute for Advanced Study, 85 Hoegiro, Dongdaemun-gu, Seoul 02455, Republic of Korea}
\email{taehun@kias.re.kr}
\date{}
\begin{document}
\allowdisplaybreaks

\begin{abstract}
    We construct an $I$-family of ancient graphical mean curvature flows over a minimal hypersurface in $\mathbb{R}^{n+1}$ of finite total curvature with the Morse index $I$ by establishing exponentially fast convergence in terms of $|x|^2-t$. As a corollary, we show that these ancient flows have finite total curvature and finite mass drop. Moreover, one family of these flows is mean convex by a pointwise estimate.
\end{abstract}

\maketitle
\section{Introduction}
A smooth family of embedded hypersurfaces $M_t\subset\mathbb{R}^{n+1}$ evolves by \textit{mean curvature flow} (MCF) if 
\begin{equation}
     F_t=-  H \nu
\end{equation}
where  $F$ is the position vector of $M_t$ and $H$, $\nu$ are the mean curvature and the outward pointing unit normal of $M_t$ at $F$, respectively. Also, if $M_t$ exists for $t\in (-\infty,T)$, then we call it \textit{ancient}.

By parabolic Liouville type theorems, ancient mean curvature flows can be characterized. See classification results for ancient flows which are noncollpased \cite{DHS, ADS19, ADS20, BC19, BC21, CDDHS22, DH24, CHH23, CHH21}, asymptotically cylindrical \cite{CHH,CHHW}, or one-sided \cite{CCMS20, CCS23}. See also construction results of ancient flows, which are, after rescaling, asymptotic to cylinders  \cite{HH}, compact shrinkers \cite{ChoiM},
asymptotically conical shrinker \cite{CCMS20}, or a certain class of complete shrinkers with mixed ends \cite{CCS23}.

\bigskip

Besides rescaled ancient flows asymptotic to shrinkers, there are ancient flows converging to minimal surfaces as time goes negative. Marmor-Payne \cite{MP} constructed ancient graphical \textit{mean convex} flows over a certain class of unstable minimal hypersurfaces in $\mathbb{R}^{n+1}$  by taking the limit of a sequence of mean convex flows. In particular, their flows uniformly converge to the limit minimal hypersurfaces. On the other hand, the first author and Mantoulidis \cite{ChoiM} constructed an $I$-parameter family of ancient flows from closed minimal surfaces in manifolds by using the unstable eigenfunctions of the Jacobi operator of the limit minimal surface, where $I$ is the Morse index of the Jacobi operator. Also, given an unstable minimal hypersurface in $\mathbb{R}^{n+1}$ with finite total curvature, Han \cite{H} showed the existence of an $I$-parameter family of ancient graphical flows whose time slices converge to the limit minimal hypersurface with polynomial decay. Note that the polynomial convergence infers beautiful geometric properties of each time slice including finite total curvature. In this paper, given a two-sided complete unstable minimal hypersurface $\Sigma \subset\mathbb R^{n+1}$, we construct an $I$-family of ancient solutions with exponential convergence to the limit minimal hypersurfaces in space-time. To be specific, each flow converges to the graph of a solution $v:\Sigma \times (-\infty,T)\to \mathbb{R} $ to $v_t=\Delta_{\Sigma}v+|A_\Sigma|^2v$, where $\Sigma$ is the limit hypersurface. Moreover, the difference between the flow and the graph of $v$ is much smaller than the decay rate of $v$. This particularly yields the existence of ancient mean convex graphical flows with bounded geometric properties including finite total curvature and finite mass drop.  The following are our main theorems.  

\begin{theorem}\label{thm:main.I_parameter}
Let $\Sigma$ be a complete two-sided unstable embedded minimal hypersurface in $\mathbb{R}^{n+1}$ $(n\geq 2)$ with finite total curvature, and let $I$ denote the Morse index of $\Sigma$. Then, there exists an $I$-family of ancient mean curvature flows $M_t^{\mathbf{a}}$, with $\mathbf{a}\in B_{\varepsilon}(0)\subset \mathbb{R}^I$ for some $\varepsilon>0$, converging to $\Sigma$ exponentially fast in space-time as $|x|^2-t\to +\infty$. Moreover, each $M_t^{\mathbf{a}}$ is an entire graph over $\Sigma$ with finite total curvature and finite mass drop from $\Sigma$ for sufficiently negative $t$.
\end{theorem}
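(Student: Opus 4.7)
The plan is to parametrize nearby hypersurfaces as normal graphs over $\Sigma$, reduce MCF to a semilinear parabolic equation on $\Sigma$, and produce the $I$-parameter family by a Banach fixed-point argument in a weighted space carrying the desired $|x|^2-t$ decay. Writing $M_t$ as the normal graph of $u(\cdot,t)\colon\Sigma\to\R$, the flow equation becomes
\begin{equation*}
u_t = Lu + Q(u,\nabla u,\nabla^2 u), \qquad L:=\Delta_\Sigma+|A_\Sigma|^2,
\end{equation*}
with $Q$ quadratic in its arguments. Since $\Sigma$ is two-sided with finite total curvature and Morse index $I$, the Jacobi operator $L$ admits exactly $I$ negative $L^2$-eigenvalues $-\lambda_I\leq\cdots\leq-\lambda_1<0$ with eigenfunctions $\phi_1,\ldots,\phi_I$; asymptotic flatness of the ends, inherited from finite total curvature, forces these eigenfunctions to decay at spatial infinity and endows $e^{sL}$ with Gaussian heat-kernel bounds comparable to the Euclidean ones.

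For $\mathbf{a}=(a_1,\ldots,a_I)$ in a small ball of $\R^I$ I would set
\begin{equation*}
v^{\mathbf{a}}(x,t):=\sum_{i=1}^I a_i\, e^{\lambda_i t}\phi_i(x),
\end{equation*}
which solves the linearized equation $v_t=Lv$ and has the largest admissible decay as $t\to-\infty$. First, I would seek $u^{\mathbf{a}}=v^{\mathbf{a}}+w^{\mathbf{a}}$ with $w^{\mathbf{a}}$ solving $w_t=Lw+Q[v^{\mathbf{a}}+w]$ by integrating via Duhamel from $-\infty$ against $e^{(t-s)L}$ on the spectral complement of $\mathrm{span}\{\phi_i\}$ and from a fixed reference time on the $I$-dimensional unstable subspace, so that the leading-order behavior is carried entirely by $v^{\mathbf{a}}$. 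The fixed-point argument is then set up in the Banach space with norm
\begin{equation*}
\norm{w}:=\sup_{\Sigma\times(-\infty,T]} e^{c(|x|^2-t)}\bigl(|w|+|\nabla w|+|\nabla^2 w|\bigr)
\end{equation*}
for small constants $c>0$ and $T<0$. The Gaussian kernel bound for $e^{sL}$ turns the convolution of $Q\lesssim |\nabla u|^2+|A_\Sigma|\,u^2$ into an output bounded by $C\norm{v^{\mathbf{a}}+w}^2$ in the same weight, and smallness of $|\mathbf{a}|$ closes the contraction to produce a unique $w^{\mathbf{a}}$, whence the flow $M_t^{\mathbf{a}}$.

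The hard part will be obtaining the sharp space-time weight $|x|^2-t$ rather than a weaker polynomial or purely temporal rate. Two ingredients are essential: (i) a uniform $L^\infty\to L^\infty$ Gaussian upper bound for $e^{sL}$ on the complete manifold $\Sigma$, valid uniformly into the ends, which reduces to standard heat-kernel theory once finite total curvature is used to control the Kato-class norm of $|A_\Sigma|^2$; and (ii) stability of the weight $e^{-c(|x|^2-t)}$ under one Duhamel step, which follows from the standard convolution estimate for Gaussians. Once this weighted estimate is in place, finite total curvature of each $M_t^{\mathbf{a}}$ follows from Schauder bounds on $\nabla^2 u^{\mathbf{a}}$ combined with the $|x|^2-t$ decay, and the finite mass drop from $\Sigma$ follows because the integrand of the first-variation formula is integrable on $\Sigma\times(-\infty,T]$ against the same weight.
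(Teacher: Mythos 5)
Your overall outline shares the paper's starting point (normal graph, $u_t=\mathcal{L}u+E(u)$, fixed point seeded by the unstable modes), but the route you take is genuinely different and, unfortunately, has a gap that blocks it from the start. The paper does not work directly on $\Sigma$ with a Duhamel/semigroup representation; it builds the solutions on the exhaustion $\Sigma_R$ with Dirichlet boundary, running the contraction in spaces $X^\delta_R$ weighted only in \emph{time} by $e^{-\delta t}$, and it obtains the solutions hierarchically, one distinct eigenvalue level $\lambda'_l$ at a time. The spatial decay is then derived \emph{afterwards}, by a maximum-principle comparison of the remainders against powers $\phi^m$ of the first eigenfunction (Lemma \ref{quo boun}, Theorem \ref{2 alpha point quo}, and the lemma preceding Theorem \ref{thm:decay and finite mass}), rather than being baked into the fixed-point norm.

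The concrete problem with your proposal is the choice of weight
\begin{equation*}
\norm{w}=\sup_{\Sigma\times(-\infty,T]} e^{c(|x|^2-t)}\bigl(|w|+|\nabla w|+|\nabla^2 w|\bigr).
\end{equation*}
On an end that is asymptotically a plane, the Jacobi operator looks like $\Delta+O(|x|^{-2})$; its $L^2$-eigenfunctions with negative eigenvalue $\lambda_i<0$ decay only like $e^{-\sqrt{|\lambda_i|}\,|x|}$ (Agmon-type decay, cited in the paper's footnote), \emph{not} Gaussianly. Consequently $e^{c|x|^2}\phi_i(x)\to\infty$, so your seed $v^{\mathbf a}=\sum a_i e^{\lambda_i t}\phi_i$ already has infinite norm, and the Duhamel/contraction scheme cannot even be initialized. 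The achievable decay (and what the paper actually proves, Theorem \ref{thm:decay and finite mass}) is of the form $e^{-\alpha |t|}e^{-\beta |x|}$, i.e.\ exponential in $|t|$ and in $|x|$ separately; the phrase ``exponentially fast as $|x|^2-t\to+\infty$'' in the statement should be read in that weaker sense, not as a rate $e^{-c(|x|^2-t)}$. If you want a one-shot weighted fixed point on $\Sigma$ you would need the weight $e^{\alpha t+\beta\rho(x)}$ with $\beta$ smaller than the Agmon exponent of the slowest-decaying $\phi_i$ and $\alpha<-\lambda_1$; even then you will have to manage the fact that $\mathcal{L}$ on the complete $\Sigma$ has essential spectrum at $0$, which is precisely what the paper's Dirichlet exhaustion and the projections $P_{R,\pm}$ are designed to handle cleanly. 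Two secondary remarks: (i) your ansatz $v^{\mathbf a}=\sum a_i e^{\lambda_i t}\phi_i$ with $L\phi_i=-\lambda_i\phi_i$ does not solve $v_t=Lv$ under the sign conventions you wrote; you need $v=\sum a_i e^{-\lambda_i t}\phi_i$ (cf.\ $\iota_R^{(l)}$ in the paper). (ii) Collapsing the $L$-level hierarchy into a single remainder $w^{\mathbf a}$ can give existence, but it will not produce the separate sharp rates $e^{-\lambda'_l t}\phi^{(1-\varepsilon)\lambda'_l/\lambda}$ per level that the paper uses to read off the parameters $a_m$ and to prove mean convexity in Theorem \ref{thm:speed}.
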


\begin{theorem}\label{thm:main mean}
Let $\Sigma$ be a complete two-sided unstable embedded minimal hypersurface in $\mathbb{R}^{n+1}$ $(n\geq 2)$ with finite total curvature. Then, for each open $U \subset \mathbb{R}^{n+1}$ with $\partial U =\Sigma$,
 there exists an ancient mean convex flow $M_t \subset U$ which is one of the $I$-parameter family $M_t^{\mathbf{a}}$ in Theorem \ref{thm:main.I_parameter}.
\end{theorem}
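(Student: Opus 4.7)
The plan is to single out one member of the $I$-parameter family from Theorem~\ref{thm:main.I_parameter} by aligning $\mathbf{a}$ with the principal unstable eigendirection of the Jacobi operator $L = \Delta_\Sigma + |A_\Sigma|^2$, and then to establish mean convexity by propagating the sign of $H$ along the flow.

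First I would fix the top positive eigenvalue $\lambda_1$ of $L$ together with an eigenfunction $\varphi_1$. Since $\Sigma$ has finite total curvature, $|A_\Sigma|$ decays at infinity, so $L$ is a Schr\"odinger perturbation of $\Delta_\Sigma$ whose essential spectrum sits in $(-\infty,0]$; its $I$ positive eigenvalues are isolated with finite multiplicity, and the largest one $\lambda_1$ is simple with a strictly positive eigenfunction $\varphi_1$ by the standard Perron--Frobenius / Allegretto--Piepenbrink argument for principal eigenvalues above the essential spectrum. Orienting $\nu_\Sigma$ into the prescribed region $U$, I would then take $\mathbf{a} = (a_1, 0, \ldots, 0)$ with $a_1 \in (0, \varepsilon)$. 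The construction of Theorem~\ref{thm:main.I_parameter} gives the graph function $v = v^{\mathbf{a}}$ of $M_t^{\mathbf{a}}$ over $\Sigma$ in the form
\begin{equation*}
v(x,t) = a_1 e^{\lambda_1 t}\varphi_1(x) + w(x,t),
\end{equation*}
where $w$, together with its first two spatial derivatives, is pointwise of strictly smaller order than the leading term as $|x|^2 - t \to +\infty$. Positivity of $a_1 \varphi_1$ then forces $v > 0$ for $t \ll 0$, so $M_t^{\mathbf{a}} \subset U$ in that regime, and the avoidance principle applied to the stationary solution $\Sigma$ propagates the inclusion $M_t^{\mathbf{a}} \subset U$ up to the existence time.

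For mean convexity I would expand the graphical mean curvature as
\begin{equation*}
H_{M_t^{\mathbf{a}}} = -Lv + Q(v,\nabla v,\nabla^2 v),
\end{equation*}
with $Q$ nonlinear of order $\geq 2$ in $(v,\nabla v,\nabla^2 v)$. The leading term equals $-\lambda_1 a_1 e^{\lambda_1 t}\varphi_1 < 0$, and the pointwise smallness of $w$, $\nabla w$, $\nabla^2 w$ should dominate both $Lw$ and $Q$ for $t$ sufficiently negative, giving $H_{M_t^{\mathbf{a}}}(\cdot, t_0) < 0$ at some $t_0 \ll 0$. Since $(\partial_t - \Delta_{M_t} - |A_{M_t}|^2) H = 0$ along MCF, the strong maximum principle then propagates $H < 0$ forward to all later times, so $M_t^{\mathbf{a}}$ is mean convex throughout its range of existence.

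The main obstacle is the pointwise comparison in the previous paragraph. Theorem~\ref{thm:main.I_parameter} is stated in terms of exponential convergence in $|x|^2-t$, but turning this into the pointwise ordering $|Lw| + |Q| \ll \lambda_1 a_1 e^{\lambda_1 t}\varphi_1$ --- in particular near the ends of $\Sigma$, where $\varphi_1$ itself decays --- requires second-derivative control on $w$ that strictly beats the leading rate. This is precisely the kind of refined estimate that the exponential space-time decay of Theorem~\ref{thm:main.I_parameter} is designed to supply (in contrast to the polynomial decay in \cite{H}), and tracking the balance between the decay of $\varphi_1$ and that of the error terms near infinity would be the technical heart of the argument.
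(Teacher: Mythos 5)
Your high-level strategy matches the paper's: select $\mathbf{a}=(a_1,0,\dots,0)$ with $a_1>0$ along the first (simple, positive) eigenfunction $\phi_1$, orient $\nu$ into $U$, and argue that the error term $w$ is pointwise dominated by the leading term $a_1 e^{-\lambda_1 t}\phi_1$. You also correctly identify the technical heart: one needs a pointwise bound on $w$ and its derivatives that decays \emph{with the same weight $\phi_1(x)$} near the ends, not just a rough exponential bound in $|x|^2-t$. This is exactly what the paper proves (Theorem \ref{thm:speed} via the estimate $\|w_R\|_{C^{2,\alpha}(B_1(x)\times(t-1,t))}\le C a_1^2 e^{-2\lambda_{R,1}t}\phi(x)$, obtained by conjugating the linearized equation with $\phi^m$ and running a weighted maximum principle). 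Since you acknowledge you do not supply this estimate, that part is incomplete but not wrong.

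The genuine gap is in how you turn the sign of $H$ at one time $t_0\ll0$ into mean convexity for all $t$. You appeal to the evolution equation $(\partial_t-\Delta_{M_t}-|A_{M_t}|^2)H=0$ and the strong maximum principle to propagate $H<0$ forward, and similarly to the avoidance principle to keep $M_t\subset U$. On a noncompact hypersurface neither is automatic: one must control $H$ (and the flow) at spatial infinity, e.g.\ by a barrier or integral argument, and you do not address this. The paper sidesteps the issue entirely. Because the weighted estimate holds for \emph{all} $t\le0$ and $e^{-\lambda_1 t}\le 1$ on $\mathbb{R}_-$, one gets directly
\begin{equation*}
\partial_t u \;\ge\; \bigl(-\lambda_1 a_1 - C a_1^2 e^{-\lambda_1 t}\bigr)e^{-\lambda_1 t}\phi \;>\;0 \quad\text{on } \Sigma\times\mathbb{R}_-
\end{equation*}
once $a_1$ is small, and then $\hat H=-V^{-1}\partial_t u<0$ pointwise at every time, with no propagation step, and likewise $u>0$ gives $M_t\subset U$ directly. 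So your argument needs either to upgrade the one-time statement $H(\cdot,t_0)<0$ to a quantitative pointwise lower bound valid on all of $\Sigma\times\mathbb{R}_-$ (as the paper does), or to supply the missing justification for the maximum/avoidance principles at infinity.
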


The proof and a more detailed statement of Theorem \ref{thm:main.I_parameter} and Theorem \ref{thm:main mean} are divided into three theorems: Theorem \ref{thm:exist}, Theorem \ref{thm:decay and finite mass}, and Theorem \ref{thm:speed}. The point in our construction is the  pointwise space-time exponential decay estimate of the profile function $u$ (see Theorem \ref{thm:decay and finite mass}) and precise error estimates. For example, the profile $u$ of a mean convex solution satisfies
\begin{equation}
     \|u-ae^{-\lambda_1 t}\phi_1\|_{C^{2,\alpha}(B_1(x)\times ( t-1, t))}\leq Ca^2e^{-2\lambda_1t}\phi_1(x)
\end{equation}
for some $a>0$, where $\phi_1 > 0$ and $\lambda_1<0$ are the first eigenfunction and eigenvalue of the Jaboci operator $\Delta_{\Sigma}+|A_\Sigma|^2$, respectively. These estimates appear in a quantitative way, which ensures us to give more precise characterizations of the ancient flows than the previous results. These quantitative estimates provide some geometric applications. For example, they imply that the ancient flows we constructed have finite total curvature and finite mass drop, and a one-parameter family of them is mean convex. 

We note that the MCF with finite total curvature
\begin{equation}
    \int_{M_t}|A|^n \, dvol_{M_t}
\end{equation}
has been studied by \cite{AY}, where they constructed ancient curve shortening flows with finite total curvature from multiple lines. Thus, our construction can be seen as a higher dimensional generalization of \cite{AY}.  Moreover,  Mramor \cite{M} recently proved a classification result for eternal mean convex flows from finite total curvature in $\mathbb{R}^3$. 

\bigskip

On the other hand, the mass drop
\begin{equation}
\begin{aligned}
     \int_{-\infty}^0\int_{M_t} H^2 \,dvol_{M_t}dt.
\end{aligned}
\end{equation}
is also an important quantity in the study of mean curvature flow, especially when we are considering the weak solutions, see \cite{I,MS,P} for instance. In particular, even if the minimal hypersurface in $\mathbb{R}^{n+1}$ has infinite mass, we can measure the mass drop of the ancient flows.
\bigskip

The paper is organised as follows. In Section \ref{sec prel}, we list some basic facts about complete unstable minimal hypersurfaces $\Sigma$ embedded in $\mathbb{R}^{n+1}$ with finite total curvature and we derive the equation for the mean curvature flow as a graph over $\Sigma$. In Section \ref{sec eigen}, we establish several estimates on the eigenfunctions of the Jacobi operator on $\Sigma$, which will be used to study the space-time asymptotics of our ancient flows. We employ the contraction mapping on each compact domain to construct the ancient flows with Dirichlet condition, and then take subsequential limits in Section \ref{sec exist}. This is the existence part of Theorem \ref{thm:main.I_parameter}. The precise exponential space-time decay of the ancient solutions and the mean convexity of one family of the flows are derived in Section \ref{sec asymp}. This completes the proof of the second part of  Theorem \ref{thm:main.I_parameter} and Theorem \ref{thm:main mean}.

\section{Preliminaries}\label{sec prel}
Consider a two-sided complete minimal hypersurface $\Si \subset \mathbb{R}^{n+1}$ with finite total curvature. We use $\nabla_i$ (and sometimes use subscript $i$ for short, for instance $u_i=\nabla_i u$) to denote the covariant derivatives on $\Si$, and use $D_i$ to denote the ordinary partial derivatives in $\mathbb{R}^n$. 


\subsection{Minimal hypersurfaces in $\mathbb{R}^{n+1}$ with finite total curvature}

Let $\Si$ be a two-sided complete minimal hypersurface in $\mathbb{R}^{n+1}$ ($n\geq 2$). We have the following well-known Jacobi operator on $\Si$:
\begin{align}\label{def:L}
\cL u := \Delta u + |A|^2 u,
\end{align}
 where $\Delta$ and $|A|$ denote the Laplacian-Beltrami operator and the square of the length of the second fundamental form of $\Si$ respectively. Associated to $\cL$ is the bilinear form $Q$ defined on $H^1(\Si)$
\begin{equation}
    Q(u,v)=\int_\Si(\nabla u\cdot\nabla v-|A|^2uv) \, dvol_\Si .
\end{equation}


A function $u \in C_c^\infty(\Sigma)$, with $u \not \equiv 0$, is called an eigenfunction of $\mathcal{L}$ with eigenvalue $\lambda \in \mathbb{R}$ if
\begin{equation}
\mathcal{L} u = -\lambda u \qu \text{in }\Si.
\end{equation}


Let $O$ be a point on $\Si$ and $B_R(O)$ denote the intrinsic ball on $\Si$ of radius $R$ centered at $O$, with $\Si_R:= B_R(O)$. By standard theory, the operator $\cL$ on $\Si_R$ with Dirichlet boundary conditions maps $C^2_c(\Si_R)$ into $C_c^0(\Si_R)$. The spectrum of $\cL$ on $\Si_R$ is discrete, consisting of eigenvalues $\lambda_{R,1}<\lambda_{R,2}\leq\cdots$, where $\lambda_{R,i}$ go to infinity. Hence, each $\Sigma_R$ has finite Morse index $\text{index}(\Sigma_R)$. Moreover, it is monotone increasing in $R$. Therefore, we can define the index of $\Sigma$ as follows.

\begin{definition}
   The index of $\Si$ is defined as $\ind(\Si)=\lim_{R\to\infty} \ind(\Si_R)$. The hypersurface $\Si$ is called unstable if $\ind(\Si)>0$.
\end{definition}

We recall the definition of finite total curvature.

\begin{definition}
A hypersurface $\Si\subset \mathbb{R}^{n+1}$ is said to have finite total curvature if
\begin{equation}\label{finite total curvature}
\int_\Si|A|^n \, dvol_\Si<\infty.
\end{equation}
\end{definition}

 It is well-known (see \cite{F,Tysk fini} for example) that the finite total curvature assumption \eqref{finite total curvature} implies that $\Si$ has a finite index. Moreover, we have the following theorem (see \cite{F,Li}).
\begin{theorem}
    Let $\Si$ be an unstable two-sided minimal hypersurface in $\mathbb{R}^{n+1}$ with finite total curvature $\int_\Si|A|^n\,dvol_\Si$. Then the index $\ind(\Si)$ of $\Si$ is finite and there is a finite dimensional subspace $W$ of $L^2(\Si)$ having orthonormal basis $\phi_1,\cdots,\phi_k$ consisting of eigenfunctions with eigenvalues $\lambda_1,\cdots,\lambda_k$ respectively. Each $\lambda_i$ is negative and $Q(u,u)\geq 0$ for $u\in C_0^\infty(\Si)\cap W^\bot$. Moreover, $dim\,W=Ind(\Si)$.
\end{theorem}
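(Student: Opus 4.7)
The plan is to prove finiteness of $\ind(\Si)$, the existence of an $L^2$-orthonormal basis of negative eigenfunctions $\phi_1,\ldots,\phi_k$, and the nonnegativity of $Q$ on $W^\perp$ simultaneously, via an exhaustion argument combined with the Hoffman--Spruck Sobolev inequality on $\Si$. First, I would exhaust $\Si$ by the intrinsic balls $\Si_R$. By standard spectral theory, $\cL$ on $\Si_R$ with Dirichlet boundary conditions has discrete spectrum $\lambda_{R,1}<\lambda_{R,2}\leq\cdots$ with $L^2$-orthonormal eigenfunctions $\phi_{R,j}$. The variational (min--max) characterization gives that $R\mapsto \lambda_{R,j}$ is monotone nonincreasing, and hence $\ind(\Si_R)$ is monotone nondecreasing in $R$.

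The crucial step is to obtain a uniform bound on $\ind(\Si_R)$ using the finite total curvature hypothesis. Combining the Hoffman--Spruck Sobolev inequality on a minimal hypersurface with H\"older's inequality, any $\phi\in C_c^\infty(\Si)$ satisfies
\begin{equation}
\int_\Si |A|^2\phi^2 \, dvol_\Si \leq C_n\Bigl(\int_{\supp\phi}|A|^n \, dvol_\Si\Bigr)^{2/n}\int_\Si |\nabla\phi|^2 \, dvol_\Si.
\end{equation}
Since $\int_\Si |A|^n<\infty$, I can choose a compact set $K\subset\Si$ large enough that the prefactor is at most $\tfrac{1}{2}$ for every $\phi$ supported in $\Si\sm K$, yielding $Q(\phi,\phi)\geq\tfrac{1}{2}\int_\Si|\nabla\phi|^2\geq 0$ on that space. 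A cutoff-plus-dimension-count argument then shows that every negative direction of $Q$ on $\Si_R$ has its essential part in a fixed neighborhood of $K$; since the space of functions on any fixed bounded neighborhood has only finitely many negative directions (by standard elliptic theory), this yields $\ind(\Si_R)\leq k$ uniformly in $R$, hence $\ind(\Si)\leq k<\infty$.

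Armed with the uniform index bound, I would extract limit eigenpairs on $\Si$. For each $1\leq j\leq k$ the sequence $\lambda_{R,j}$ is monotone and bounded below, so converges to some $\lambda_j<0$, while interior Schauder estimates applied to the $L^2$-normalized $\phi_{R,j}$ yield a smooth subsequential limit $\phi_j$ solving $\cL\phi_j=-\lambda_j\phi_j$. Applying the Sobolev--H\"older estimate to cutoffs of $\phi_j$ on $\Si\sm K$ produces an integrability tail bound that forces $\phi_j\in L^2(\Si)$, and $L^2$-orthonormality of the $\phi_{R,j}$ transfers to $\phi_1,\ldots,\phi_k$ in the limit (after a Gram--Schmidt step if the limit eigenvalues coincide). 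Finally, for $u\in C_c^\infty(\Si)\cap W^\perp$, I would take $R$ large enough that $\supp u\subset\Si_R$ and expand $u=\sum_i c_{R,i}\phi_{R,i}$; smooth convergence $\phi_{R,i}\to\phi_i$ for $i\leq k$ makes the first $k$ coefficients $o(1)$, so $Q(u,u)\geq \lambda_{R,k+1}\|u\|_{L^2}^2-o(1)\geq -o(1)$ because $\lambda_{R,k+1}\geq 0$ by the index bound; letting $R\to\infty$ gives $Q(u,u)\geq 0$ and $\dim W=k=\ind(\Si)$. The main technical obstacle is precisely the uniform index bound on $\Si_R$: the Hoffman--Spruck inequality on $\Si$, together with $\int_\Si|A|^n<\infty$, is what turns the purely local spectral theory on each $\Si_R$ into global information on the noncompact hypersurface $\Si$.
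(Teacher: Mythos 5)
The paper does not prove this theorem itself; it records it as a citation to \cite{F,Li} (and invokes \cite{Tysk fini} just above for the finite-index claim). Your sketch reconstructs the standard Fischer-Colbrie--Tysk--Li argument, and the broad outline — exhaust by $\Si_R$, bound $\ind(\Si_R)$ uniformly via a Sobolev inequality on the minimal hypersurface, extract limit eigenpairs, and deduce $Q\geq 0$ on $W^\perp$ from the uniform bound — is the correct and standard one for the cited result.

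There is, however, a genuine gap at $n=2$. Your key inequality
\begin{equation}
\int_\Si |A|^2\phi^2 \, dvol_\Si \leq C_n\Bigl(\int_{\supp\phi}|A|^n \, dvol_\Si\Bigr)^{2/n}\int_\Si |\nabla\phi|^2 \, dvol_\Si
\end{equation}
is obtained by H\"older with conjugate exponents $(n/2, n/(n-2))$ followed by the embedding $W_0^{1,2}\hookrightarrow L^{2n/(n-2)}$ coming from Hoffman--Spruck. For $n=2$ the dual exponent degenerates to $\infty$, and $\|\phi\|_{L^\infty}^2$ is not controlled by $\int|\nabla\phi|^2$ on a surface, so the inequality as written fails precisely in the lowest-dimensional case the paper needs (the paper assumes $n\geq 2$ throughout). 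The two-dimensional case requires a separate route: either Fischer-Colbrie's conformal compactification (by Huber--Osserman, finite total curvature makes $\Si$ conformally a punctured compact Riemann surface and turns the Jacobi operator into a Schr\"odinger operator with integrable potential on the compactification), or the pointwise quadratic decay $|A|=O(\rho^{-2})$ recorded in Remark~\ref{A bound g alpha} together with a Hardy-type inequality to obtain stability outside a compact set. A secondary, fixable point: when you pass to $C^\infty_{loc}$-limits $\phi_{R,j}\to\phi_j$, you must rule out escape of $L^2$-mass to infinity so that the limits are nonzero and remain orthonormal. Your "tail bound" remark is the right instinct, but the clean argument is a concentration lemma: since $\lambda_{R,j}\leq -c<0$ while $Q\geq 0$ on functions supported in $\Si\sm K$, a logarithmic cutoff near $K$ forces a fixed positive fraction of $\|\phi_{R,j}\|_{L^2}^2$ to lie inside a fixed compact enlargement of $K$, uniformly in $R$; this should be stated explicitly before taking limits and applying Gram--Schmidt.
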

We recall the definition and some properties for a minimal hypersurface which is regular at infinity.
\begin{definition}
[\cite{S,Li}] Suppose $n\geq 2$. A minimal hypersurface $\Si$ in $\mathbb R^{n+1}$ is regular at infinity if outside a compact set, each connected component $\Si_i$ of $\Si$ is the graph of a function $v$ over a hyperplane $P$, such that for $y\in P$,
\begin{equation}\label{fun asy 2}
    v(y)=a\log|y|+b+\frac{c_1y_1+c_2y_2}{|y|^2}+O(|y|^{-2})\qu \text{for some constants } a,b,c_1,c_2 \text{ if } n=2,
\end{equation}
and
 \begin{equation}\label{fun asy n}
     |y|^{n-2}|v(y)|+|y|^{n-1}|Dv(y)|+|y|^n|D^2v(y)|\leq C \qu \text {for some constant } C\text{ if }  n\geq 3.
 \end{equation}
Here, $f=O(|y|^{-2})$ implies $|f|\leq C |y|^{-2}$  in $\{|y|\geq R\}$ for some $C,R>0$. Each $\Si_i$ is called the end of $\Si$.
\end{definition}
 
\begin{proposition}
[\cite{S}] Suppose $\Si \subset\mathbb{R}^{3}$ is a complete immersed minimal surface with finite total curvature and each end $\Si_i$ of $\Si$ is embedded. Then $\Si$ is regular at infinity.
\end{proposition}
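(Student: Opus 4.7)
The plan is to invoke the Osserman--Huber theory of complete minimal surfaces of finite total curvature in $\mathbb{R}^3$ and then extract the stated asymptotic expansion from the Weierstrass representation, using embeddedness of each end to rule out higher-order behavior.

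First, by Huber's theorem, $\Sigma$ is conformally a compact Riemann surface $\overline{\Sigma}$ with finitely many punctures $p_1,\dots,p_k$, each end $\Sigma_i$ corresponding to a puncture $p_i$. By Osserman's theorem the stereographic Gauss map $g$ and the height differential $\omega$ appearing in the Weierstrass representation $F=\Re\int\bigl(\tfrac12(1-g^2),\tfrac{i}{2}(1+g^2),g\bigr)\omega$ extend meromorphically across each $p_i$. Fix an end $\Sigma_i$ with a punctured disk coordinate $z$ at $p_i$, rotate in $\mathbb{R}^3$ so that the limit normal is vertical (i.e.\ $g(p_i)=0$ or $\infty$), and write $g(z)=z^{m}g_0(z)$ and $\omega=z^{-k}\omega_0(z)\,dz$ with $g_0, \omega_0$ holomorphic and nonzero at $z=0$.

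Second, I would use the embeddedness of $\Sigma_i$ to pin down the pole order $k$. Integrating the horizontal Weierstrass one-forms shows that the projection $y(z)=(x_1,x_2)(z)$ has leading behavior $y(z)\sim C z^{-(k-1)}$, so $\Sigma_i$ projects to the plane as a covering of degree $k-1$ over an exterior disk $\{|y|\geq R\}$. Embeddedness outside a compact set forces this degree to be $1$, i.e.\ $k=2$; in particular $\Sigma_i$ becomes a graph $x_3=v(y)$ over $\{|y|\geq R\}$. Writing $z$ as a function of $1/y$ via the biholomorphism implicit in $y(z)$ and expanding the third coordinate $x_3=\Re\int g\,\omega$ as a Laurent series, the residue of $g\omega$ at $p_i$ contributes the logarithmic term $a\log|y|$, the constant of integration yields $b$, the next subleading (dipole) terms in the expansion of $g\omega$ in $z$ produce the harmonic correction $(c_1 y_1+c_2 y_2)/|y|^2$, and the remaining tail is controlled by $O(|y|^{-2})$. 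Standard Cauchy estimates for the meromorphic data then upgrade this pointwise expansion for $v$ to the corresponding bounds on $Dv$ and $D^2 v$.

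The main obstacle is the second step: translating the purely geometric assumption that $\Sigma_i$ is embedded into the analytic condition $k=2$. This is carried out by a winding/monotonicity argument, showing that the image of a small circle $|z|=\varepsilon$ under the horizontal projection winds $k-1$ times around infinity, and that any winding number greater than one produces self-intersections of $\Sigma_i$ in every neighborhood of infinity. Once $k=2$ is established, everything else is a routine Laurent expansion and Cauchy estimate. (In higher dimensions $n\geq 3$, as recorded in \eqref{fun asy n}, one replaces Weierstrass data by the fact that each end is, after harmonic-coordinate change, asymptotic to a hyperplane with decay governed by the Green's function on $\mathbb{R}^n$, and the same scheme yields the stronger polynomial decay.)
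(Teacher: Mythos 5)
The paper does not prove this proposition; it merely cites Schoen's 1983 paper \cite{S}, where this is essentially Proposition 1. Your sketch faithfully reproduces Schoen's (and Osserman's) argument: Huber's uniformization, Osserman's extension of the Gauss map and Weierstrass data across the punctures, normalization so that the limiting normal at the end is vertical, and then extraction of the expansion $v = a\log|y| + b + (c_1y_1+c_2y_2)/|y|^2 + O(|y|^{-2})$ from the Laurent expansion of the Weierstrass data in the local coordinate $z$, inverted through the biholomorphism $z \mapsto y$. The key structural step — that an embedded end forces the pole order of $\omega$ to be exactly $2$ — is also the crux in Schoen's argument, and the winding-number reasoning you give is the standard way to see it.

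Two small points you gloss over, worth knowing. First, before the winding argument can force $k=2$, one needs $k\geq 2$; with $k=1$ the horizontal projection $y(z)$ grows only like $\log z$ and does not cover a deleted neighborhood of infinity, so the dichotomy ``degree $k-1$ over an exterior disk'' does not even apply. The bound $k\geq 2$ comes from completeness together with the period (residue-reality) conditions needed for $F = \Re\int\Phi$ to be single-valued near the puncture; these residue constraints are also exactly what make the coefficient $a$ real and the $\log|y|$ term harmonic rather than multivalued, so they deserve a sentence. Second, your parenthetical about $n\geq 3$ is really the content of the adjacent proposition attributed to Tysk \cite{Tysk fini}, not part of Schoen's $\mathbb{R}^3$ statement; there is no Weierstrass representation to replace in higher dimensions, and the argument there is genuinely different (the end is written as a graph over a hyperplane directly, and decay comes from the linear theory for the minimal surface equation near a flat end). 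That aside, the sketch is correct and is the same route as the cited source.
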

\begin{proposition}
[\cite{Tysk fini}] Suppose $n\geq 3$, $\Si\subset\mathbb{R}^{n+1}$ is a complete immersed minimal hypersurface with finite total curvature. Then $\Si$ is regular at infinity.
\end{proposition}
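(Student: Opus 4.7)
The proof strategy proceeds in three steps: curvature decay at infinity, reduction to a single graphical sheet, and asymptotic expansion via the minimal surface equation.

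First, I would upgrade the intrinsic integral smallness $\int_{\Si\setminus B_R}|A|^n\to 0$ as $R\to\infty$ to the pointwise statement $|A|(x)\to 0$ as $|x|\to\infty$. The main tool is the $\varepsilon$-regularity theorem for minimal hypersurfaces (Choi--Schoen, or equivalently a scaled Allard regularity): if $\int_{B_r(x)\cap \Si}|A|^n<\varepsilon_0$, then $\sup_{B_{r/2}(x)\cap\Si}|A|^2\leq Cr^{-2}$. Together with the monotonicity formula, which forces $\Si$ to have finite density at infinity, one can choose extrinsic balls of radius $r\sim|x|$ while retaining the integral smallness, yielding $|A|(x)=o(|x|^{-1})$ on each end.

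Second, with curvature decay in hand, each connected component of $\Si\setminus B_R$ is a multigraph over a single hyperplane for large $R$. The tangent cone at infinity of an end, obtained by blow-down together with monotonicity and the curvature decay, is an integer multiple of a plane $P$; embeddedness of the end forces the multiplicity to be $1$, so the end is a single-sheeted entire smooth graph of a function $v$ defined on $P\cap\{|y|\geq R\}$ with $|Dv|,|D^2v|\to 0$.

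Third, I would analyze the minimal surface equation
\begin{equation*}
\dv\Bigl(\tfrac{Dv}{\sqrt{1+|Dv|^2}}\Bigr)=0,
\end{equation*}
rewritten as $\Delta v=\dv F(Dv)$ with $|F(p)|\leq C|p|^3$. Since $|Dv|$ is small at infinity, this is a small perturbation of Laplace's equation on the exterior of a disk in $\mathbb{R}^n$. The dimensional hypothesis $n\geq 3$ is crucial here: bounded harmonic functions on exterior domains admit the separation-of-variables expansion $u=a|y|^{2-n}+O(|y|^{1-n})$, whereas the case $n=2$ is critical and forces the extra $\log|y|$ term seen in \eqref{fun asy 2}. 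A standard bootstrap—reinsert the Newtonian potential of $\dv F(Dv)$ and iterate, applying interior Schauder estimates on dyadic annuli to pick up derivatives—yields
\begin{equation*}
|y|^{n-2}|v|+|y|^{n-1}|Dv|+|y|^n|D^2v|\leq C,
\end{equation*}
which is exactly the regularity-at-infinity condition \eqref{fun asy n}.

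The main obstacle is the first step: passing from the purely intrinsic integrability $\int_\Si|A|^n<\infty$ to effective extrinsic smallness on balls of radius comparable to $|x|$, since a priori an end could spiral, have large density, or have $|A|$ concentrating on thin tubes at infinity. The monotonicity formula plus a careful covering argument rule out these pathologies; once this is achieved, the remaining graphical reduction and asymptotic analysis reduce to standard linear elliptic theory on exterior domains.
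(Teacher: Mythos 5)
The paper cites this proposition from Tysk without proof, so there is no in-paper argument to measure your sketch against; the comments below concern the sketch itself.

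The gap is in Step 2, where you deduce multiplicity one for each end from \emph{embeddedness}; but the proposition only assumes $\Sigma$ is \emph{immersed}. Compare the preceding $n=2$ statement of Schoen, which explicitly requires the ends to be embedded---for surfaces in $\mathbb{R}^3$ that hypothesis cannot be dropped (the Enneper surface has finite total curvature but a non-graphical end). For $n\geq 3$ the embeddedness hypothesis is absent because the mechanism ruling out multigraphs is purely topological: once the Gauss map is near-constant on an end $E$, the projection of $E$ onto the limit hyperplane $P$ is a proper local diffeomorphism, hence a covering map onto $P\setminus \overline{B_R}$, which is homotopy equivalent to $\mathbb{S}^{n-1}$ and therefore simply connected exactly when $n\geq 3$; a connected covering of a simply connected base is a diffeomorphism, so $E$ is single-sheeted. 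This is the real role of the dimensional restriction in Step 2, not merely in the exterior harmonic expansion of Step 3 as your write-up suggests. Replacing the appeal to embeddedness with this simple-connectivity argument would make the sketch match the stated hypotheses; the remaining steps (the $\varepsilon$-regularity argument yielding $|A|=o(|x|^{-1})$, and the linearized asymptotic analysis on the exterior domain) form a reasonable route to the conclusion.
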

\begin{remark}\label{A bound g alpha}
    By the interior regularity theory for uniformly elliptic equations, a uniform $C^1$-bound for $v$ on $P$ implies uniform $C^k$-bounds for $v$ on $P$. For our purposes, we use the fact that if $\Si$ has finite total curvature, then $|A|,|\nabla|A||$ has linear decay on $\Si$, and the induced metric on $\Si$ tends to the Euclidean metric near infinity in the $C^{3,\alpha}$ sense for any $\alpha\in(0,1)$. In fact, if we use $y_1,\cdots,y_n$ to denote the coordinates on $P$, then the metric on $\Si_i$ is $g_{ij}=D_ivD_jv+\delta_{ij}=O(|y|^{-2(n-1)})+\delta_{ij}$, and the second fundamental form $h_{ij}= D_{ij}v(1+|Dv|^2)^{-\frac{1}{2}}=O(|y|^{-n})$, as $|y|\to\infty$ ($n\geq 2$) by \eqref{fun asy 2} and \eqref{fun asy n}.
\end{remark}

The study of complete minimal hypersurfaces has a long history. For more classical results in this direction, we refer the readers to \cite{JM,Osser 1, Na, CM}.  In the rest of the paper, $\Si$ will always denote a two-sided complete embedded minimal hypersurface in $\mathbb R^{n+1}$ with finite total curvature. Thus $\Si$ has a finite index and is regular at infinity.

\subsection{Graphical Mean Curvature Flow on Minimal Hypersurfaces}
Consider a two-sided complete embedded minimal hypersurface $\Si$. Let $M$ be a normal graph of $u:\Si\times \mathbb{R}$, namely
\begin{align}
F(x) = x + u(x) \nu(x)\in M.
\end{align}
where $\nu(x)$ is the unit normal vector of $\Sigma$ at $x \in \Sigma$. Here, we call $u$ the \textit{profile function} of $M$. We use the hat notation to indicate geometric quantities associated with $M$. For instance, we use $g_{ij}, h_{ij}$ to denote the induced metric and the second fundamental form of $\Si$, while $\hat{g}_{ij}$, $\hat h_{ij}$ to denote those of $M$ at $F(x)$. 

A direct computation shows
\begin{align}
\D_i F &= \D_i x+ u h_i^k\D_kx + u_i\nu, 
\end{align}
and the induced metric and outward unit normal vector are given by
\begin{align}
\hat{g}_{ij} &= \inn{\D_i F}{\D_jF}= g_{ij}+2uh_{ij}+u^2 h_i^kh_{kj}+ u_iu_j,
\\
\hat{\nu} &= V(\nu -\hat {g}^{ij}u_i\D_j F),
\end{align}
where $V= |\nu - \hat{g}^{ij} u_i\D_j F|^{-1}=(1-|\D u|_{\hat g}^2)^{-1/2}$. Moreover, since
\begin{align}
\D_{ij}^2  F &= (-h_{ij} - uh_i^kh_{kj} + u_{ij})\nu + (u_ih_j^k + u_jh_i^k + u(\D_j h_i^k))\D_kx,
\end{align}
the second fundamental form $\hat{h}_{ij} = \inn{-\D_{ij}^2  F}{  \hat{\nu}}$ of $M$ is 
\begin{equation}\label{hatA}
\begin{aligned}
\hat{h}_{ij} 
&= {V}^{-1}
(h_{ij} + uh_i^kh_{kj} - u_{ij}) +V(u_ih_j^k + u_jh_i^k + u(\D_j h_i^k))u_m\hat{g}^{mn}(g_{kn} + uh_{nk}).
\end{aligned}
\end{equation}
Note that $\inn{\nu}{ \hat{\nu}} = V(1 - |\D u|_{\hat g}^2)=V^{-1}$. Then the mean curvature is given by $\hat H = \hat g^{ij} \hat h_{ij}$, where $\hat g^{ij}$ is the inverse metric of $\hat g_{ij}$. 

The mean curvature flow implies the evolution equation of the profile $u$ that
\begin{align}\label{eq:MCF_u}
\pa_ t u = -V\hat H,
\end{align}
with $V := \inn{\nu}{\hat\nu}^{-1} = (1 - |\nabla u|_{\hat g}^2)^{-\frac{1}{2}}$. The equation \eqref{eq:MCF_u} can be reformulated as
\begin{align}
\left(\pa_t - \cL\right) u = E(u) \quad \text{on } \Si \times \mathbb{R}_{-},
\end{align}
where $\cL$ is the operator specified in \eqref{def:L}, and $E(u)$ is 
\begin{align}\label{def:E}
E(u):=-V\hat H - \cL u 
=&\,(\hat g^{ij}-g^{ij})(u_{ij}-h^k_ih_{kj}u)-h_{ij}(\hat g^{ij}-g^{ij}+2 g^{im}g^{nj}h_{mn}u)
\\
&\qu - \frac{u_m\hat{g}^{ij}\hat{g}^{mn}}{1 - u_iu_j\hat{g}^{ij}} (u_ih_j^k + u_jh_i^k + u\D_jh_i^k)( g_{kn} + uh_{nk}).
\end{align}
Assuming that $\|u\|_{C^2(\Sigma)}\ll 1$, the asymptotic expansion yields
\begin{align}
\hat g^{ij} = g^{ij} - 2h^{ij}u + O(\|u\|_{C^1(\Sigma)}^2),
\end{align}
and therefore 
\begin{align}\label{eq:EO}
E(u) = O(\|u\|_{C^2(\Sigma)}^2).
\end{align}

\subsection{Notations}
In this paper, we denote by $\rho(x_1, x_2)$ the intrinsic distance between points $x_1$ and $x_2$ in $\Si$. Then, we denote by  $B_R(x):=\{x'\in \Si:\rho(x,x')<R\}$ the intrinsic ball in $\Si$ centered at $x\in \Si$ with radius $R$, and for a fixed point $O \in  \Si$ we define
\begin{align}
    \Si_R:=B_R(O).
\end{align}

\bigskip

Now, we let $I_R$ denote the Morse index of $\Sigma_R$, and denote by $\{\phi_{R,i}\}_{i=1}^{I_R}$ the orthonormal Dirichlet eigenfunctions of $\mathcal{L}$ on $\Sigma_R$ with the corresponding eigenvalues $\lambda_{R,1}<\lambda_{2,R}\leq \cdots \leq \lambda_{R,I_R}<0$. Then, in the same manner we denote by $\phi_i$ and $\lambda_i$ the orthornomal eigenfunctions and eigenvalues with $\lambda_1\leq \lambda_2\leq \cdots \leq \lambda_I<0$, where $I=\text{index}(\Sigma)$. 

Moreover, we use $L$ denote the number of distinct negative eigenvalues of $\cL$ on $\Sigma$, and denote by $\la_1=\la'_1< \la'_2< \cdots< \la'_{L}$ the corresponding distinct negative eigenvalues. We use $\lambda_R = \lambda_{R,1}$, $\phi_R = \phi_{R,1}$; $\lambda = \lambda_1$, $\phi = \phi_1$ to be the simplified notations for the first eigenvalue and eigenfunction on $\Si_R$ and $\Si$, respectively.
\bigskip

Finally, we note that, $I_R\nearrow I$, $\lambda_{R,i}\searrow \lambda_i$, and $\phi_{R,i}\to\phi_i$ as $R\nearrow\infty$. Thus, in the following, we will choose $R$ sufficiently large, such that $I_R=I$.


\section{Eigenfunctions of $\cL$}\label{sec eigen}
Recall that $\phi$ is the first eigenfunction of $\cL=\De+|A|^2$ on $\Si$ and $\la$ the corresponding first eigenvalue. That is, $\phi>0$ in $\Si$ and $\la<0$ satisfy
\begin{align}\label{eq phi}
\De \phi + |A|^2 \phi +\la \phi = 0 \qu \text{in } \Si.
\end{align}
\subsection{Harnack and H\"older estimate}

We first establish a Harnack inequality for $\phi$.
\begin{theorem}[Harnack inequality]\label{harnack}
Suppose $\phi>0$ denotes the first eigenfunction of the operator $\cL$ on $\Si$. Then, it holds that
\begin{equation}
|\D \log \phi| \le C \qu \text{on } \Si,
\end{equation}
where $C=C(n,|\lambda|, \sup_\Si |A|, \sup_\Si |\nabla |A|^2|) > 0$ is a constant.
\end{theorem}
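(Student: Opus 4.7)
The plan is to run a Bochner-type maximum principle argument on $w := \log \phi$. Dividing \eqref{eq phi} by $\phi>0$ yields
\begin{equation*}
\Delta w + |\nabla w|^2 = -|A|^2 - \lambda.
\end{equation*}
Set $f := |\nabla w|^2$. By Bochner's formula,
\begin{equation*}
\tfrac{1}{2}\Delta f = |\nabla^2 w|^2 + \langle \nabla w, \nabla \Delta w\rangle + \operatorname{Ric}(\nabla w,\nabla w).
\end{equation*}
Since $\Sigma\subset\mathbb{R}^{n+1}$ is minimal, the Gauss equations give $\operatorname{Ric} \geq -|A|^2 g$. Combining this with $|\nabla^2 w|^2 \geq (\Delta w)^2/n$ and $\nabla \Delta w = -\nabla|A|^2 - \nabla f$, I obtain a differential inequality of the form
\begin{equation*}
\tfrac{1}{2}\Delta f + \langle \nabla w, \nabla f\rangle \;\geq\; \tfrac{1}{n}\,f^2 - C_1\sqrt{f} - C_2 f - C_3,
\end{equation*}
where $C_1,C_2,C_3$ depend only on $n$, $|\lambda|$, $\sup_\Sigma|A|$, $\sup_\Sigma|\nabla|A|^2|$, all of which are finite thanks to the finite-total-curvature hypothesis and Remark \ref{A bound g alpha}.

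Since $\Sigma$ is noncompact, the next step is to localize by choosing a cutoff $\eta$ on $\Sigma_{2R}$ with $\eta \equiv 1$ on $\Sigma_R$, $\eta \equiv 0$ near $\partial\Sigma_{2R}$, and $R|\nabla\eta| + R^2|\Delta\eta| \leq C$. Such cutoffs exist because bounded $|A|$ together with minimality gives bounded Ricci on $\Sigma$, so the Laplacian comparison theorem controls $\Delta\rho$ on $\Sigma_{2R}$; alternatively one pulls back an ambient cutoff $\chi(|x|/R)$ and exploits the graphicality of the ends. Applying the maximum principle to $G := \eta^2 f$ at an interior maximum $x_0$, the identity $\nabla G = 0$ forces $\eta\nabla f = -2f\,\nabla\eta$, which substituted into $\Delta G(x_0)\le0$ gives $\eta^2 \Delta f \leq C f/R^2$ at $x_0$.

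Feeding these into the Bochner inequality, dividing by $f(x_0)$ and writing $H_0 := \eta(x_0)\,f(x_0)^{1/2}$, all terms can be re-expressed in terms of $H_0$ and $R$, yielding an inequality of the shape
\begin{equation*}
\frac{H_0^2}{n} \;\leq\; C_4 + \frac{C_5}{H_0} + \frac{C_6 H_0}{R} + \frac{C_7}{R^2},
\end{equation*}
after which Young's inequality absorbs the $C_6 H_0/R$ term and produces a bound $H_0 \leq C$ independent of $R$. Consequently $\sup_{\Sigma_R} f \leq \sup G \leq C$ uniformly in $R$, and sending $R\to\infty$ concludes $|\nabla\log\phi|^2 \le C$ on $\Sigma$. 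The main obstacle in this argument is the noncompactness: one needs a cutoff on $\Sigma$ with controlled Laplacian, and this is exactly where the (globally) bounded second fundamental form coming from finite total curvature (Remark \ref{A bound g alpha}) is used; the remaining manipulation is classical Li--Yau style at the interior maximum.
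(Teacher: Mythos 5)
Your proposal follows the same Bochner/Li--Yau gradient-estimate strategy as the paper: set $f=|\nabla\log\phi|^2$, combine the eigenfunction equation with the Bochner formula, Gauss equation ($\operatorname{Ric}\ge -|A|^2 g$), and Cauchy--Schwarz $|\nabla^2 w|^2\ge(\Delta w)^2/n$, then run the maximum principle on $\eta^2 f$ using a cutoff whose Laplacian is controlled by the Laplacian comparison theorem (valid here since finite total curvature gives uniformly bounded $|A|$ and hence a uniform lower Ricci bound). The only small inaccuracy is the claimed cutoff bound $R^2|\Delta\eta|\le C$: with merely $\operatorname{Ric}\ge -C$ (not $\ge 0$), Laplacian comparison gives $\Delta\rho\lesssim C$ at scale $\rho\sim R$, so for a cutoff $\eta=\chi(\rho/R)$ one only gets $R|\Delta\eta|\le C$; this changes your final term from $C_7/R^2$ to $C_7/R$ but does not affect the conclusion. (The paper sidesteps this normalization issue by taking $\eta=((2R)^2-\rho^2)_+$, unbounded in $R$, and proving $F=\eta^2 f\le CR^4$; the content is the same.)
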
 

\begin{proof}
In the following proof, $C$ denotes a constant depending only on $n,|\lambda|, \sup_\Si |A|, \sup_\Si |\nabla |A|^2|$, which may change from line to line. Let $v = \log \phi$ and $w = |\nabla v|^2$. From equation (\ref{eq phi}),
\begin{equation}\label{eq vlog}
\Delta v = -w - |A|^2 - \lambda.
\end{equation}
Note that 
\begin{equation}
\D_k\D_k\D_iv = \D_k\D_i\D_k v = \D_i\D_k\D_k v + R_{ip}\D^pv,    
\end{equation}
and the Gauss equation gives 
\begin{equation}
R_{ik} = Hh_{ik} - h_{im}h^{m}_k = - h_{im}h^{m}_k=-h_i^jh_{jk}\geq-|A|^2\delta_{ik},    
\end{equation}
where $R_{ik}$ is the Ricci curvature. Thus
\begin{align}
\D^iv\De(\D_iv)=\D^iv \D_i(\De v)+R_{ip}\D^iv\D^p v \ge -\D^iv \D_i(w+|A|^2)-|A|^2 w.
\end{align}
On the other hand, by the Cauchy--Schwarz inequality and \eqref{eq vlog}, we obtain 
\begin{equation}
    2\D_k\D_i v \D^k\D^i v \ge \tfrac{2}{n}(\De v)^2 =\tfrac{2}{n} (w+|A|^2+\la)^2.
\end{equation}
Hence,
\begin{align}\label{eq:w}
\De w = 2\D^i v \De (\D_iv)+2 \D_k\D_i v\D^k\D^iv\ge 
-2\D v\cdot \D w -2w^{\fr{1}{2}}|\D |A|^2| -2|A|^2 w+\tfrac{2}{n}(w+|A|^2+\la)^2.
\end{align}

\bigskip

Define a cutoff function as $\eta = ((2R)^2 - \rho^2)_+$, where $R \geq 1$ is a constant, and $\rho(x)$ denotes the intrinsic distance from a fixed point $O \in \Si$. Then on the set $\{\eta>0\}$,
\begin{equation}\label{grd eta}
    \D_i\eta = -2\rho\rho_i \quad \text{and} \quad \De \eta = -2\rho \De \rho -2.
\end{equation}
Since $\Si$ has finite total curvature, 
\begin{equation}
R_{ij}\geq -|A|^2\delta_{ij} \geq -C\delta_{ij},
\end{equation}
by Remark \ref{A bound g alpha}. The Laplacian comparison theorem (Corollary 1.2 in Chapter 1 of \cite{SY}) yields $\rho    \Delta\rho \leq (1 + C\rho)$, which implies
\begin{equation}\label{del eta}
    \Delta\eta^2 = 2\eta \De \eta +2|\D\eta|^2\geq -C\eta (1+\rho)+ 2|\D \eta|^2.
\end{equation}

We now define $F = \eta^2 w$. Then $F$ attains its maximum at some point $x_0\in \Si$ by definition of $F$.

If $F(x_0)=0$, then $F(x)\leq F(x_0)=0$ for $x\in Si_R$, which implies that $w\equiv 0$ on $\Si_R$. We are done. 

If $F(x_0)>0$, then $x_0\in \bar \Si_{2R}\setminus\partial \Si_{2R}$ and $w(x_0)>0$. By the first derivative test, we have at $x_0$
\begin{equation}\label{firs de}
    \D w = -2w\eta^{-1}\D \eta \qu \text{and so}\qu -2\D v \cdot \D w\ge -8\eta^{-1}  \rho w^{\fr{3}{2}}.
\end{equation}
By maximum principle and using \eqref{eq:w}, \eqref{grd eta}, \eqref{del eta}, \eqref{firs de},
\begin{equation}
    \begin{aligned}
        0&\ge\Delta F = (\Delta \eta^2)w + \eta^2\big(\De w\big) 
        - 8|\D \eta|^2w\\
        &\ge -C\eta (1+\rho)w +\eta^2(-2\D v\cdot \D w -2w^{\fr{1}{2}}|\D |A|^2| -2|A|^2 w+\tfrac{2}{n}(w+|A|^2+\la)^2)-6|\D\eta|^2w\\
        &\ge \tfrac{2}{n}\eta^2 w^2-8\eta \rho w^{\frac{3}{2}}+\eta^2(((\tfrac{4}{n}-2)|A|^2+\tfrac{4}{n}\lambda)w-2w^{\fr{1}{2}}|\D |A|^2|)-C\eta (1+\rho)w-24 \rho^2 w.
    \end{aligned}
\end{equation}
Multiplying $\eta^2$ and noting $\rho(x_0)< 2R$ with $R\ge1$, we get
\begin{align}\label{max 1}
0\ge F^2-CR^2 F^{3/2}-C(1+|A|^2+|\la|)R^4 F  -C|\D |A|^2|R^6 F^{\fr{1}{2}},
\end{align}
and therefore, $F\le CR^4$ at $x_0$. Since $\eta^2(x)\ge 3R^4$ for $x\in \Si_R$, we conclude that
\begin{align}
w(x)=F(x_0)/\eta^2(x)\le C \qu \text{for } x\in \Si_R,
\end{align}
where $C$ is independent of $R$. Then the desired result follows by taking $R\ra \infty$.
\end{proof}

\subsection{Eigenfunctions on $\Si_R$}
In this subsection, we derive the estimates for eigenfunctions of $\cL$ on $\Si_R$ with Dirichlet condition in terms of $\phi$. Recall that $I_R$ is the index of $\cL$ with Dirichlet condition on $\Si_R$ and $\phi_{R,i}\in C^\infty_c(\Si_R)$ the $i$-th eigenfunction of $\cL$ on $\Si_R$ with eigenvalue $\lambda_{R,i}<0$. That is, $\phi_{R,i}$ satisfies the equation
 \begin{equation}\label{phiri eq}
  \left\{
    \begin{aligned}
        \cL\phi_{R,i}&=-\lambda_{R,i}\phi_{R,i}\qu\text{in }\Si_R,\\
        \phi_{R,i}&=0 \qu\text{on }\partial \Si_R.
    \end{aligned}
    \right.
    \end{equation}
\begin{lemma}\label{quo boun}
Let $\phi$ be the first eigenfunction of $\mathcal{L}$ on $\Sigma$, and let $\phi_{R,i}$ (for $i \leq I_R$) be the $i$-th eigenfunction of $\mathcal{L}$ on $\Sigma_R$ with eigenvalue $\lambda_{R,i} < 0$. Given any $\varepsilon \in (0, \frac{1}{2})$, there exist constants $C = C(\Sigma, \varepsilon) > 0$ and $R_0 = R_0(\varepsilon) > 0$ such that if $R > R_0$, then
\begin{equation}\label{quo bou eig}
|\phi_{R,i}| \leq C \phi^{(1-\varepsilon)\frac{\lambda_{R,i}}{\lambda}}.
\end{equation}
\end{lemma}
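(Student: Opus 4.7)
The plan is to compare $\phi_{R,i}$ with the auxiliary function $\phi^{\alpha}$ for $\alpha=(1-\varepsilon)\lambda_{R,i}/\lambda\in(0,1-\varepsilon]$, and to bound the ratio $w:=\phi_{R,i}/\phi^{\alpha}$ by the maximum principle. Note that the exponent range makes sense because $\lambda\le\lambda_{R,i}<0$ gives $0<\lambda_{R,i}/\lambda\le 1$, and for $R$ large enough one even has $|\lambda_{R,i}|\ge|\lambda_i|/2$ since $\lambda_{R,i}\searrow\lambda_i<0$; in particular $\alpha$ stays in a fixed compact subinterval of $(0,1-\varepsilon]$ uniformly in $R$.

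First I would compute directly from $\cL\phi=-\lambda\phi$ and the chain rule:
\begin{equation}
\cL(\phi^{\alpha}) = \bigl(-\alpha\lambda +(1-\alpha)|A|^{2}-\alpha(1-\alpha)|\D\log\phi|^{2}\bigr)\phi^{\alpha}.
\end{equation}
Writing $\phi_{R,i}=w\phi^{\alpha}$ and plugging into \eqref{phiri eq}, then dividing by $\phi^{\alpha}$, the ratio $w$ satisfies the linear elliptic equation
\begin{equation}
\De w + 2\alpha\,\D\log\phi\cdot\D w + c(x)\,w = 0 \qu\text{in }\Si_R,
\end{equation}
with
\begin{equation}
c(x) = \varepsilon\lambda_{R,i}+(1-\alpha)|A|^{2}-\alpha(1-\alpha)|\D\log\phi|^{2},
\end{equation}
using the key cancellation $-\alpha\lambda+\lambda_{R,i}=\varepsilon\lambda_{R,i}$ that motivates the choice of $\alpha$.

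Next I would exhibit a compact set $K_{\varepsilon}\subset\Si$, independent of $R$, outside of which $c\le 0$. The term $\varepsilon\lambda_{R,i}$ is bounded above by $-\varepsilon|\lambda_i|/2$ for $R$ large, the last summand is manifestly nonpositive, and by Remark~\ref{A bound g alpha} we have $|A|(x)\to 0$ as $\rho(x)\to\infty$ since $\Si$ is regular at infinity. Because $1-\alpha\le 1$, one can choose $K_{\varepsilon}$ so large that $|A|^{2}\le\varepsilon|\lambda_i|/4$ on $\Si\setminus K_{\varepsilon}$, and hence $c(x)\le \varepsilon\lambda_{R,i}/2<0$ there, uniformly in $R$. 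The classical maximum principle then forces the extrema of $w$ on $\Si_R\setminus K_{\varepsilon}$ to be attained on $\pa K_{\varepsilon}\cup\pa\Si_R$. On $\pa\Si_R$ we have $\phi_{R,i}=0$, so $w=0$; on $\pa K_{\varepsilon}$, the positive lower bound $\phi\ge\delta_{\varepsilon}>0$ (continuity and positivity) combined with a uniform sup bound $\sup_{K_{\varepsilon}}|\phi_{R,i}|\le C$ gives $|w|\le C$. Hence $|w|\le C$ throughout $\Si_R\setminus K_{\varepsilon}$, and on $K_{\varepsilon}$ itself the same sup bound together with $\phi^{\alpha}\ge\delta_{\varepsilon}^{1-\varepsilon}$ yields boundedness as well.

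The step I expect to be most delicate is the uniform-in-$R$ sup bound for $\phi_{R,i}$ on the fixed compact set $K_{\varepsilon}$. This should follow from the $L^{2}$-orthonormality $\|\phi_{R,i}\|_{L^2(\Si_R)}=1$ together with interior Moser iteration applied to \eqref{phiri eq}; what needs checking is that the constants in Moser's estimate are $R$-independent, which is true because the coefficient $|A|^{2}$ is a fixed bounded function on $\Si$ and the eigenvalues satisfy $\lambda\le\lambda_{R,i}\le \lambda_i/2$ uniformly for $R$ large. Alternatively, one can argue by $C^{2,\alpha}_{loc}$ convergence $\phi_{R,i}\to\phi_i$. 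Once this uniform bound is in hand, the maximum-principle step closes the argument and delivers the claimed estimate $|\phi_{R,i}|\le C\phi^{(1-\varepsilon)\lambda_{R,i}/\lambda}$ with $C=C(\Si,\varepsilon)$.
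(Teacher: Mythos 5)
Your proposal is correct and follows essentially the same route as the paper: compare $\phi_{R,i}$ with $\phi^{\gamma}$ for $\gamma=(1-\varepsilon)\lambda_{R,i}/\lambda$, derive the elliptic equation for the ratio (producing the same zeroth-order coefficient), use the quadratic decay of $|A|^2$ from Remark~\ref{A bound g alpha} to make that coefficient have the good sign outside a fixed compact set, use $C^{\infty}_{loc}$ convergence $\phi_{R,i}\to\phi_i$ to bound the ratio on that compact set uniformly in $R$, and close with the maximum principle using the Dirichlet data on $\partial\Sigma_R$. One small remark: your sign bookkeeping is the correct one — the paper's displayed inequality asserting the coefficient is negative in \eqref{eq:phiri_phi}'s setting appears to be a typo (it should be positive, equivalently $c\le 0$ in your normalization), since $-\varepsilon\lambda_{R,i}>0$ is the dominant term as $|A|\to 0$, exactly as your argument exploits.
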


\begin{proof}
Note $\Delta\phi + (|A|^2 + \lambda)\phi = 0$, and let $\hat\psi=\phi^\gamma$. Then we have
\begin{align}
    \Delta(\tfrac{\phi_{R,i}}{\hat\psi})=&\tfrac{\Delta\phi_{R,i}}{\hat\psi}-\tfrac{\phi_{R,i}\Delta\hat\psi}{\hat\psi^{2}}-2\tfrac{\nabla\hat\psi}{\hat\psi}\cdot\nabla(\tfrac{\phi_{R,i}}{\hat\psi})\\
    =&\tfrac{-(|A|^2+\lambda_{R,i})\phi_{R,i}}{\hat\psi}+\tfrac{\gamma(|A|^2+\lambda)\phi_{R,i}}{\hat\psi}-\tfrac{\gamma-1}{\gamma}\tfrac{\phi_{R,i}}{\hat\psi}\tfrac{|\nabla\hat\psi|^2}{\hat\psi^2}-2\tfrac{\nabla\hat\psi}{\hat\psi}\cdot\nabla(\tfrac{\phi_{R,i}}{\hat\psi})\\
    =&((\gamma-1)|A|^2+\gamma\lambda-\lambda_{R,i}+(1-\gamma)\gamma\tfrac{|\nabla\phi|^2}{\phi^2})\tfrac{\phi_{R,i}}{\hat\psi}-2\gamma\tfrac{\nabla\phi}{\phi}\cdot\nabla(\tfrac{\phi_{R,i}}{\hat\psi}).
\end{align}
Fix $\varepsilon \in (0, \frac{1}{2})$. Define $\gamma := (1-\varepsilon)\frac{\lambda_{R,i}}{\lambda}<1$. Then we have
\begin{equation}\label{eq:phiri_phi}
    \Delta\left(\tfrac{\phi_{R,i}}{\hat{\psi}}\right) = \left((\gamma-1)|A|^2 - \varepsilon \lambda_{R,i} + (1-\gamma)\gamma \tfrac{|\nabla \phi|^2}{\phi^2}\right)\tfrac{\phi_{R,i}}{\hat{\psi}} - 2\gamma \tfrac{\nabla \phi}{\phi} \cdot \nabla \left(\tfrac{\phi_{R,i}}{\hat{\psi}}\right).
\end{equation}
Since $|A|^2 \leq \frac{C}{1 + |x|^2}$ by Remark \ref{A bound g alpha} and $\lambda_{R,i} \searrow \lambda_i < 0$ as $R \nearrow \infty$, there exists a sufficiently large $R_0(\varepsilon) > 0$ such that
\begin{equation}
    (\gamma-1)|A|^2 - \varepsilon \lambda_{R,i} + (1-\gamma)\gamma \tfrac{|\nabla \phi|^2}{\phi^2} < 0 \quad \text{for } x \in \Sigma \setminus \Sigma_{R_0}.
\end{equation}

On the other hand, we note that $|\phi_{R,i}|=0\leq \hat\psi$ on $\partial \Si_R$. Additionally, as $\phi_{R,i} \to \phi$ in $C^\infty_{loc}(\Si)$ and $\phi>0$ is uniformly bounded away from zero on $\Si_{R_0}$, we obtain a uniform bound for $\hat\psi^{-1}\phi_{R,i}$ on $\bar{\Si}_{R_0}$ for large enough $R \geq R_0$. 
Applying the maximum principle to $\hat\psi^{-1}\phi_{R,i}^+$ and $\hat\psi^{-1}\phi_{R,i}^-$ on $\Si_R \setminus \Si_{R_0}$, we conclude that 
\begin{equation}
    \phi_{R,i}^+,\phi_{R,i}^-\leq C\hat\psi=C\phi^{(1-\varepsilon)\tfrac{\lambda_{R,i}}{\lambda}},\quad R\geq R_0(\varepsilon).
\end{equation}
where $C$ depends on $\Si$ and $\varepsilon$. Here, $f^+=\max\{f,0\},f^-\max\{-f,0\}$ denote the positive and negative parts of $f$ respectively. This establishes the lemma.
\end{proof}

\begin{theorem}\label{2 alpha point quo}
Let $\phi$ be the first eigenfunctions of $\cL$ on $\Si$, and let $\phi_{R,i}$ ($i\leq I_R$) be the $i$-th eigenfunction of $\cL$ on $\Si_R$ with eigenvalue $\lambda_{R,i}<0$. For every $\varepsilon\in(0,\tfrac{1}{2})$, there exists $R_2=R_2(\varepsilon,\Si)$, $C=C(\varepsilon)$ large, such that for any $R\geq R_2$,
    \begin{equation}\label{2 alpha quo boun}
        \|\phi_{R,i}\|_{C^{2,\alpha}(B_1(x))} \leq C\phi^{(1-\varepsilon)\tfrac{\lambda_{R,i}}{\lambda}}(x)\quad x\in \Si.
    \end{equation}
\end{theorem}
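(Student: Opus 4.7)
The plan is to combine the pointwise $L^\infty$ bound from Lemma \ref{quo boun} with interior Schauder estimates for the linear elliptic equation that $\phi_{R,i}$ satisfies, using the Harnack control from Theorem \ref{harnack} to convert a pointwise bound into a bound on a unit ball.

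Fix $\varepsilon \in (0,\tfrac{1}{2})$ and set $\gamma := (1-\varepsilon)\lambda_{R,i}/\lambda$. Because $\lambda_1 \le \lambda_{R,i} < 0$, the exponent $\gamma$ lies in $(0, 1-\varepsilon)$, hence is uniformly bounded. By Theorem \ref{harnack}, $|\nabla \log \phi| \le C_0$ on $\Sigma$, so integrating along a minimizing geodesic shows that for every $y \in B_2(x)$,
\begin{equation}
    \phi^\gamma(y) \le e^{2C_0 \gamma}\phi^\gamma(x) \le e^{2C_0}\phi^\gamma(x).
\end{equation}
Combined with Lemma \ref{quo boun}, applied for $R \ge R_0(\varepsilon)$, this yields the $L^\infty$ bound
\begin{equation}
    \|\phi_{R,i}\|_{L^\infty(B_2(x))} \le C\, \phi^\gamma(x),
\end{equation}
with $C = C(\varepsilon)$ independent of $x$ and $R$.

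Next, recall that $\phi_{R,i}$ solves the linear elliptic equation
\begin{equation}
    \Delta \phi_{R,i} + \bigl(|A|^2 + \lambda_{R,i}\bigr)\phi_{R,i} = 0 \quad \text{on } \Sigma_R.
\end{equation}
By Remark \ref{A bound g alpha}, the induced metric on $\Sigma$ is uniformly bounded in $C^{3,\alpha}$ (approaching the Euclidean metric near infinity), and $|A|^2$ together with its derivatives has linear decay, so the zero-order coefficient $|A|^2 + \lambda_{R,i}$ has $C^\alpha$ norm bounded uniformly in $R$ and $x$. Interior Schauder estimates therefore give, provided $B_2(x)\subset \Sigma_R$,
\begin{equation}
    \|\phi_{R,i}\|_{C^{2,\alpha}(B_1(x))} \le C\, \|\phi_{R,i}\|_{L^\infty(B_2(x))} \le C\, \phi^\gamma(x).
\end{equation}
When $B_2(x)$ meets $\partial \Sigma_R$, we instead invoke the boundary Schauder estimate using the Dirichlet condition $\phi_{R,i}|_{\partial \Sigma_R} = 0$; since $\partial \Sigma_R$ is smooth and the metric is uniformly well-controlled, the same bound follows.

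The only nontrivial point is the uniformity of the Schauder constant in $x$ and $R$, which rests on the asymptotic flatness recorded in Remark \ref{A bound g alpha}. Once that is in hand, the two ingredients, Harnack for $\phi$ and Lemma \ref{quo boun} fit together immediately and the theorem follows.
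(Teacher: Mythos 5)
Your proposal is correct and follows essentially the same route as the paper: the $L^\infty$ bound from Lemma \ref{quo boun}, the Harnack estimate from Theorem \ref{harnack} to move $\phi^\gamma$ across $B_2(x)$ at the cost of a uniform constant, and interior Schauder estimates whose constants are uniform in $x$ and $R$ by the asymptotic flatness recorded in Remark \ref{A bound g alpha}. The only difference is cosmetic — you apply the gradient Harnack estimate directly to $\phi^\gamma$ while the paper applies it to $\phi$ and then raises to the power $\gamma$ — and you are slightly more explicit than the paper in noting the uniform bound on $\gamma$ and in discussing the case where $B_2(x)$ reaches $\partial\Sigma_R$, which the paper passes over silently.
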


\begin{proof}
By the Harnack inequality (\Cref{harnack}), 
we have 
\begin{equation}
\sup_{B_2(x)}\phi\leq C\inf_{B_2(x)}\phi\le C \phi(x),\quad \text{for }x\in \Si,
\end{equation}
where $C$ is a constant independent of $x$. Applying the Schauder estimate to $\phi_{R,i}$ and \Cref{quo boun}, for $R\ge R_1(\e)$, we obtain for any $x\in \Si$
\begin{align}
\|\phi_{R,i}\|_{C^{2,\alpha}(B_1(x))}\leq&\, C\|\phi_{R,i}\|_{L^\infty(B_2(x))}
\leq C\sup_{B_2(x)}\phi^{(1-\varepsilon)\tfrac{\lambda_{R,i}}{\lambda}}
\leq C\phi(x)^{(1-\varepsilon)\tfrac{\lambda_{R,i}}{\lambda}},\quad x\in \Si,
\end{align}
which completes the proof.
\end{proof}

\begin{remark}
Using a similar argument, one can show the following bound for $\phi$: 
\begin{align}\label{eq:phi2a}
\|\phi\|_{C^{2,\alpha}(B_1(x))}\le C\phi(x),\quad x\in \Si,
\end{align}
with $C$ independent of $x$.
\end{remark}

\begin{lemma}\label{lem hat error quo}
If $E(x, t)$ is a function defined on $\Si\times\mathbb R_-$ satisfying 
\begin{equation}
    \|E\|_{C^\alpha(B_1(x)\times( t-1, t))}\leq Ce^{-2\lambda_{R,i} t}\phi^{2(1-\varepsilon)\frac{\lambda_{R,i}}{\lambda}}(x),
\end{equation}
with $C$ independent of $x$, then
\begin{equation}\label{hat error quo alpha l}
    \| E\phi^{-2(1-\varepsilon)\frac{\lambda_{R,i}}{\lambda}}\|_{C^\alpha(B_1(x)\times( t-1, t))}\leq Ce^{-2\lambda_{R,i} t}.
\end{equation}
\end{lemma}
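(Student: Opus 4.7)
The lemma is essentially a multiplicative estimate: we are multiplying $E$ by the space-only function $\phi^{\gamma}$ where $\gamma:=-2(1-\varepsilon)\lambda_{R,i}/\lambda$ is a negative constant (since both $\lambda,\lambda_{R,i}<0$), and checking that the pointwise bound on $E$ promotes to a Hölder bound on $E\phi^{\gamma}$ once the $\phi$-factor is absorbed. The plan is to use the product rule for Hölder norms together with the Harnack estimate (\Cref{harnack}) and the pointwise $C^{2,\alpha}$ bound \eqref{eq:phi2a} on $\phi$ itself.

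First I would establish the key intermediate claim that
\begin{equation}
\|\phi^{\gamma}\|_{C^{\alpha}(B_1(x))}\le C\phi(x)^{\gamma},
\end{equation}
uniformly in $x\in\Sigma$. For the $L^\infty$ part, \Cref{harnack} yields $\sup_{B_1(x)}\phi\le C\inf_{B_1(x)}\phi$, and since $\gamma$ has fixed sign this immediately controls $\phi^{\gamma}$ on $B_1(x)$ by $\phi(x)^{\gamma}$. For the Hölder semi-norm, the chain rule gives $\nabla(\phi^{\gamma})=\gamma\phi^{\gamma-1}\nabla\phi$, and Theorem \ref{harnack} tells us $|\nabla\log\phi|\le C$, so $|\nabla(\phi^{\gamma})|\le C|\gamma|\phi^{\gamma}\le C\phi(x)^{\gamma}$ on $B_1(x)$; interpolating (or invoking \eqref{eq:phi2a} directly, which controls two derivatives of $\phi$ by $\phi(x)$) upgrades this to the claimed $C^{\alpha}$ bound.

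Next, since $\phi$ is independent of $t$, the same bound transfers verbatim to the parabolic cylinder:
\begin{equation}
\|\phi^{\gamma}\|_{C^{\alpha}(B_1(x)\times(t-1,t))}\le C\phi(x)^{\gamma}.
\end{equation}
Then I would apply the standard product rule for Hölder norms on a bounded domain,
\begin{equation}
\|fg\|_{C^{\alpha}}\le C\bigl(\|f\|_{C^{\alpha}}\|g\|_{L^{\infty}}+\|f\|_{L^{\infty}}\|g\|_{C^{\alpha}}\bigr),
\end{equation}
with $f=E$ and $g=\phi^{\gamma}$. Plugging in the hypothesis $\|E\|_{C^{\alpha}}\le Ce^{-2\lambda_{R,i}t}\phi(x)^{-\gamma}$ and the bound above, the factor $\phi(x)^{-\gamma}\cdot\phi(x)^{\gamma}$ cancels, leaving $Ce^{-2\lambda_{R,i}t}$ as desired.

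I do not anticipate a serious obstacle here: the only delicate point is making sure the $C^{\alpha}$ bound on $\phi^{\gamma}$ is genuinely uniform in $x$, and that is exactly what the Harnack inequality (via $|\nabla\log\phi|\le C$) and \eqref{eq:phi2a} are designed to give. The positivity $\phi>0$ ensures $\phi^{\gamma}$ is smooth even though $\gamma$ is negative and generally non-integer, so no regularity issue arises from the exponentiation.
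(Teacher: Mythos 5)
Your proof is correct and takes essentially the same approach as the paper: both rely on the Harnack inequality (Theorem \ref{harnack}) and the pointwise $C^{2,\alpha}$ bound \eqref{eq:phi2a} to control the Hölder norm of the $\phi$-power factor, then combine with the Leibniz/quotient rule for Hölder norms. Your intermediate reduction to the uniform bound $\|\phi^{\gamma}\|_{C^{\alpha}(B_1(x))}\le C\phi(x)^{\gamma}$ is a slightly cleaner packaging of the same content the paper expresses via explicit quotient-rule identities.
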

\begin{proof}
    We note that for any functions $f,g$ with $g>0$, we have
    \begin{align}
        \nabla (\tfrac{f}{g})=&\tfrac{\nabla f}{g}-\tfrac{f}{g}\tfrac{\nabla g}{g}\\
        \nabla^2(\tfrac{f}{g})=&\tfrac{\nabla^2 f}{g}-2\tfrac{\nabla f}{g}\otimes\tfrac{\nabla g}{g}-\tfrac{f}{g}\tfrac{\nabla^2g}{g}+2\tfrac{f}{g}\tfrac{\nabla g\otimes\nabla g}{g^2}\\
        (\tfrac{f}{g})(x_1)- (\tfrac{f}{g})(x_2)=&\tfrac{f(x_1)-f(x_2)}{g(x_1)}+\tfrac{f(x_2)}{g(x_1)}\tfrac{g(x_2)-g(x_1)}{g(x_1)}\\
        (fg)(x_1)-(fg)(x_2)=&(f(x_1)-f(x_2))g(x_1)+f(x_2)(g(x_1)-g(x_2)).
    \end{align}
    By applying these identities with suitable $f,g$ and \eqref{eq:phi2a}, we have \eqref{hat error quo alpha l}.    
\end{proof}

\section{Existence of ancient Solutions}\label{sec exist}

\subsection{Ancient Solutions on $\Si_R$} 
Our aim is to identify the existence of ancient solutions for equation \eqref{eq:MCF_u}. To achieve this, we initially focus on finding such solutions on the domain $\Si_R$ with the Dirichlet boundary condition. Specifically, we seek solutions for:
\begin{equation}\label{eq:MR}
\left\{
\begin{aligned}
     \partial_ t u_R = \mathcal{L}u_R + E(u_R) \quad  \text{in }\Si_R\times\mathbb{R}_-,\\
     u_R=0 \qu\text{on } \pa\Si_R\times\mathbb{R}_-.
\end{aligned}
\right.
\end{equation}

Following the argument in \cite{CS1}, we utilize the contraction mapping technique. This method necessitates establishing the following estimates for the error $E$.

\begin{lemma}\label{lem:E}
There exists a constant $\e=\varepsilon(\si, \alpha) > 0$, independent of $R$, such that if $\|u\|_{C^2} \leq \varepsilon$, then the following inequalities hold for some constant $C=C(\Si,n,\al)$:
\begin{align}
        |E(u)| &\leq C \|u\|^2_{C^2},\label{c0}
        \\
        \|E(u)( t)\|_{C^\alpha} &\leq C \|u\|_{C^{1,\alpha}} \|u\|_{C^{2,\alpha}}.\label{holder}
\end{align}
Furthermore, if both $u$ and $v$ satisfy the condition $\|u\|_{C^2} + \|v\|_{C^2} \leq \varepsilon$, then
    \begin{equation}\label{diff holder}
        \|E(u)( t) - E(v)( t)\|_{C^\alpha} \leq C \|(u - v)( t)\|_{C^{2,\alpha}} (\|u( t)\|_{C^{2, \alpha}} + \|v( t)\|_{C^{2, \alpha}}).
    \end{equation}
\end{lemma}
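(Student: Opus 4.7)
The plan is to treat $E(u)$ as a nonlinear expression whose leading-order behavior is quadratic in the jet $(u,\D u,\D^2 u)$, with coefficients given by bounded intrinsic quantities on $\Si$. Because $\Si$ has finite total curvature, Remark \ref{A bound g alpha} gives uniform bounds on $g_{ij}$, $g^{ij}$, $|A|$ and $|\D|A||$ over all of $\Si$, so the constants extracted below will automatically be independent of $R$. The smallness hypothesis $\|u\|_{C^2}\le\varepsilon$ is used only to keep the denominator $1-u_iu_j\hat g^{ij}$ and the inverse metric $\hat g^{ij}$ uniformly bounded and close to their unperturbed counterparts, which is needed to make the composition with $X\mapsto X^{-1}$ and $X\mapsto (1-X)^{-1}$ smooth.

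For \eqref{c0}, I would substitute the asymptotic expansion $\hat g^{ij}=g^{ij}-2h^{ij}u+O(\|u\|_{C^1}^2)$ into \eqref{def:E}. In the first group $(\hat g^{ij}-g^{ij})(u_{ij}-h_i^kh_{kj}u)$, the prefactor is $O(|u|+|\D u|^2)$ and the second factor is $O(\|u\|_{C^2})$, giving an $O(\|u\|_{C^2}^2)$ bound. The second group is designed so that the linear contribution cancels against the $-\cL u$ already subtracted, leaving a quadratic remainder controlled by bounded curvature. The third group is manifestly a product in which every factor other than the bounded denominator and the bounded $\D_j h_i^k$ carries a $u$ or $\D u$, and so is $O((|u|+|\D u|)^2)$.

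For \eqref{holder}, I would apply the Hölder product rule $\|fg\|_{C^\alpha}\le C(\|f\|_{C^\alpha}\|g\|_{C^0}+\|f\|_{C^0}\|g\|_{C^\alpha})$ together with smoothness of the maps $X\mapsto X^{-1}$ and $X\mapsto (1-X)^{-1}$ near their base points to promote each pointwise factor appearing in \eqref{def:E} to a $C^\alpha$ factor. The asymmetric bound $\|u\|_{C^{1,\alpha}}\|u\|_{C^{2,\alpha}}$ (rather than the cruder $\|u\|_{C^{2,\alpha}}^2$) arises from inspecting each quadratic summand of \eqref{def:E} and noting that in every one of them, one factor involves only $u$ and $\D u$, hence is controlled by $\|u\|_{C^{1,\alpha}}$, while the partner factor may contain $\D^2 u$ and is controlled by $\|u\|_{C^{2,\alpha}}$.

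For \eqref{diff holder}, I would regard $E$ as a smooth map $\Phi(u,\D u,\D^2 u)$ on a neighbourhood of the origin, satisfying $\Phi(0)=0$ and $D\Phi(0)=0$. The fundamental theorem of calculus gives
\begin{equation*}
E(u)-E(v)=\int_0^1 D\Phi\bigl(s(u,\D u,\D^2u)+(1-s)(v,\D v,\D^2v)\bigr)\cdot\bigl(u-v,\D(u-v),\D^2(u-v)\bigr)\,ds,
\end{equation*}
and since $D\Phi$ vanishes at the origin and is $C^1$ nearby, one has $|D\Phi(w)|\le C|w|$ for small $w$. The same product-rule bookkeeping as in \eqref{holder} then delivers \eqref{diff holder}. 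I expect the main obstacle to be precisely this bookkeeping in \eqref{holder}: verifying summand by summand of \eqref{def:E} that the quadratic structure really does split into one $C^{1,\alpha}$-controlled factor times one $C^{2,\alpha}$-controlled factor, and that the denominator admits a uniform Hölder bound (via the smoothness of $(1-X)^{-1}$) that is independent of $R$.
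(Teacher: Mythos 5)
Your proposal is correct and essentially reproduces the paper's argument: both rest on (i) viewing $E$ as a smooth function of the jet $(u,\D u,\D^2 u)$ that vanishes to second order at the origin, (ii) the structural fact that $E$ is affine in $\D^2 u$ (the paper records this as $\partial^2 E/\partial u_{ij}\partial u_{kl}=0$, you discover it by inspecting the summands of \eqref{def:E}), which is what yields the asymmetric $\|u\|_{C^{1,\alpha}}\|u\|_{C^{2,\alpha}}$ bound rather than $\|u\|_{C^{2,\alpha}}^2$, and (iii) a fundamental-theorem-of-calculus estimate for the Lipschitz bound \eqref{diff holder}. The only stylistic difference is that the paper proves \eqref{c0} via the integral remainder $E=\int_0^1(1-s)\tilde\D^2_{q(u),q(u)}E\,ds$ rather than the term-by-term expansion you use, and for \eqref{diff holder} the paper carries out the Hölder-difference bookkeeping more explicitly with the double interpolation $f_i(s)=sq(u)(x_i)+(1-s)q(v)(x_i)$, which you compress into ``the same product-rule bookkeeping.'' Both routes go through.
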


\begin{proof}
We note that $E$ depends on $u,\nabla u,\nabla^2u$ from \eqref{def:E}, so we treat $E=E(z,p,r)$ as a function of $(z,p,r)\in\mathbb\mathbb R\times\mathbb R^n\times\mathbb R^{n\times n}$, and use $\tilde{\nabla}$ to denote the derivative of $E$ with respect to $(z,p,r)$. 
To simplify the notation, let $q(u)$ represent the triplet $(u, \nabla u, \D^2 u)$. Define $f(s) = E(q(su)) = E(su, s\nabla u, s\D^2 u)$ for $s \in [0,1]$. By the definition of $E$, \eqref{def:E}, it follows that $f(0) = f'(0) = 0$. Then, we have
\begin{align}
E(u, \nabla u, \D^2u) =   \int_0^1 (1-s)f''(s) ds= \int_0^1 (1-s) \tilde\D^2_{q(u),q(u)} E ds,
\end{align}
where the Hessian of $E$ in the integral is evaluated at $q(su) = (su, s\nabla u, s\D^2u)$. The first conclusion follows directly since the Hessian of $E$ is bounded and we have
\begin{align}
|\tilde\D^2_{q(u), q(u)} E| \leq |\tilde\D^2 E|_{L^\infty} |q(u)|_{L^\infty}^2 \leq C \|u\|_{C^2}^2.
\end{align}
    
\bigskip

For the second inequality, utilizing the relation $|\int_0^1 f(x,\cdot)dx|_{C^\alpha} \leq \int_0^1 |f(x,\cdot)|_{C^{\alpha}}dx$ with $|fg|_{C^\alpha} \leq |f|_{\mathcal{C}^0}|g|_{C^\alpha} + |f|_{C^\alpha}|g|_{C^0}$ and $|f \circ g|_{C^\alpha} \leq |\nabla f|_{C^0}|g|_{C^\alpha}$, we obtain for any $ t \in (-\infty, 0]$,
\begin{align}
    |E( t)|_{C^\alpha} \leq C(n)\|u( t)\|_{C^{2,\alpha}}\|u( t)\|_{C^{1,\al}},
\end{align}
where we used $\frac{\partial^2 E}{\partial u_{ij}\partial u_{kl}}=0$.

\bigskip

Let $t_1, t_2$ be in the interval $[ t-1,  t]$ and $x_1, x_2$ belong to $\Si_R$. For $i = 1, 2$, define $f_i(s) = sq(u)(x_i) + (1-s)q(v)(x_i)$, which represents a linear interpolation between $q(u)(x_i)$ and $q(v)(x_i)$. Then we have
\begin{align}
I =& E(f_1(1)) - E(f_1(0)) - E(f_2(1)) + E(f_2(0)) \\
=& \int_0^1 \tilde\nabla E (f_1(s)) \cdot \left(\tfrac{d f_1(s)}{ds} - \frac{d f_2(s)}{ds}\right)  ds - \int_0^1 \left(\tilde\nabla E (f_2(s)) - \tilde\nabla E (f_1(s))\right) \cdot \tfrac{d f_2(s)}{ds}  ds.
\end{align}
Since $\frac{d f_i(s)}{ds}=q(u-v)(x_i)$, for the first integral, we have
\begin{align}
\int_0^1 \tilde\nabla E (f_1(s)) \cdot \left(q(u-v)(x_1)-q(u-v)(x_2)\right)  ds
\le C|\tilde\D E|\|u-v\|_{C^{2,\alpha}}(|x_1-x_2|^\alpha+|t_1-t_2|^{\alpha/2}),
\end{align}
and for the second integral, we obtain
\begin{align}
&\int_0^1 \left(\tilde\nabla E (f_2(s)) - \tilde\nabla E (f_1(s))\right) \cdot \tfrac{d f_2(s)}{ds}  ds
\le C|\tilde\D^2E|(f_2(s)-f_1(s))\|u-v\|_{C^2}
\\
&\le C|\tilde\D^2E|(\|u\|_{C^{2,\alpha}}+\|v\|_{C^{2,\alpha}})\|u-v\|_{C^2}(|x_1-x_2|^\alpha+|t_1-t_2|^{\alpha/2}).
\end{align}
Combining these two inequalities, we have the desired result.
\end{proof}

To prove the existence of solutions \eqref{eq:MR}, we need the following definitions. 
For $\alpha\in (0,1)$ and a function $f:\Si\times \mathbb{R}_-\to\mathbb{R}$, we define 
\begin{equation}
    \begin{aligned}
    \|f\|_{l,\alpha;R, t}:&=\sum_{i+2j\leq l}\sup_{\Si_R\times( t-1, t)}|\partial_x^i\partial_t^j f|+\sum_{i+2j=l}[\partial_x^i\partial_t^jf]_{\al;R, t},\\
    \end{aligned}
\end{equation}
where
\begin{align}
[f]_{\alpha;R, t} &:= \sup_{\substack{(x_i,t_i) \in \Si_R \times ( t-1, t) \\ (x_1,t_1) \neq (x_2,t_2)}} \left\{\frac{|f(x_1,t_1) - f(x_2,t_2)|}{d(x_1,x_2)^\alpha + |t_1-t_2|^{\frac{\alpha}{2}}} \right\}.
\end{align}
We also define $X_R^\de$ to be the set of functions $f:\Si\ti \R_-\ra \R$ so that
\begin{align}
\|f\|_{X^{\delta}_R}:=\|f\|_{{2,\alpha,\delta;R}}:&=\sup_{ t\leq 0}\{e^{-\delta t}\|f\|_{2,\alpha;R, t}\} < \infty.
\end{align}
Similarly, we define $L^{2,\de}_R(\Si_R\ti \R_-)$ to be the set of functions $f:\Si\ti \R_-\ra \R$ so that
\begin{align}
\|f\|_{L^{2,\delta}_R}:&=\sup_{ t\leq 0}\{e^{-\delta t}\|f(\cdot, t)\|_{L^2(\Si_R)}\}.
\end{align}

We fix an $L^2$ orthonormal sequences of eigenfunctions $\phi_{R,j}$ of $\cL$ on $\Si_R$ with the Dirichlet condition such that $\mathcal{L}\phi_{R,j}=-\lambda_{R,j}\phi_{R,j}$ and $(\phi_{R,i},\phi_{R,j})_{L^2}=\de_{ij}$, where $\phi_{R,j}\in H^1_0(\Si_R)$. Then $\{\phi_{R,j}\}_{j=1}^\infty$ forms an orthonormal basis of $L^2(\Si_R)$. Define $v_j=(v,\phi_{R,j})$ and $P_{R,j}v=(v,\phi_{R,j})\phi_{R,j}$, and 
\begin{equation}
    P_{R,-}=\sum_{j=1}^IP_{j},\quad P_{R,0}=\sum_{j=I+1}^{I+K_R}P_{R,j}, \quad P_{R,+}=Id-(P_{R,-}+P_{R,0}),
\end{equation}
where $I=\ind(\mathcal{L})$, $K_R=\operatorname{dim\, ker}(\mathcal{L})$ on $\Si_R$ (Recall that we choose $R_0$ large such that $I_R=I$ for $R\geq R_0$).
We choose $\delta>0$, $\delta\neq-\lambda_{R,j}$ for any $j$. Denote by $J=\{j:\lambda_{R,j}<-\delta\}\subset\{1,\cdots, I\}$. 

When $R=\infty$, i.e., $\Si_R=\Si$, we omit $R$ in the above norm. For instance, we write 
\begin{equation}
\begin{aligned}
&\|f\|_{l,\alpha, t}:=\sum_{i+2j\leq l}\sup_{\Si\times( t-1, t)}|\partial_x^i\partial_t^j f|+\sum_{i+2j=l}[\partial_x^i\partial_t^jf]_{\al; t},\\
   & X^\delta=\{f\in C^{2,\alpha}(\Si\times\mathbb R_-):\|f\|_{X^{\delta}}:=\|f\|_{{2,\alpha,\delta}}:=\sup_{ t\leq 0}\{e^{-\delta t}\|f\|_{2,\alpha; t}\}<\infty\}.
\end{aligned}
\end{equation}
We also note that for $f\equiv 0$ on $(\Si\setminus \Si_R)\times\mathbb R_-$, the norms with $R$ and without $R$ are the same, i.e., 
\begin{equation}
    \|f\|_{l,\alpha,\delta;R}=\|f\|_{l,\alpha,\delta}\qu \text{ if }f\equiv 0 \text{ on }(\Si\setminus \Si_R)\times\mathbb R_-.
\end{equation}

\begin{lemma}\label{app solu}
    Fix any $0<\delta\notin\{-\lambda_{R,j}\}_{j=1}^\infty$. If $\|f\|_{L_R^{2,\delta}}<\infty$, then the equation
    \begin{equation}
  \left\{
    \begin{aligned}
         \partial_ t u-\mathcal{L}u&=f(x, t) \qu\text{in }\Si_R\times\mathbb{R}_-,\\
        u&=0 \qu\text{on } \pa(\Si_R\times\mathbb{R}_-)
    \end{aligned}
    \right.
    \end{equation}
    has a unique solution $u$ satisfying $\|u\|_{L^{2,\delta}_R}<\infty$ and $P_{R,j}(u(\cdot,0))=0$ for $j\in J$. Furthermore, there exists a constant $C=C(\Si,\alpha,\delta)$, independent of $R\geq R_0$, such that $\|u\|_{L_R^{2,\delta}}\leq C\|f\|_{L_R^{2,\delta}}$ and $\|u\|_{X^{\de}_R}\leq C\|f\|_{0,\alpha,\delta;R}$.
\end{lemma}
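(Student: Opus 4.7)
The plan is to exploit the $L^2(\Si_R)$-orthonormal basis $\{\phi_{R,j}\}$ to diagonalize the problem. Writing $u(x,t)=\sum_j u_j(t)\phi_{R,j}(x)$ and $f(x,t)=\sum_j f_j(t)\phi_{R,j}(x)$, the equation decouples into scalar ODEs $u_j'(t)+\lambda_{R,j}u_j(t)=f_j(t)$ for $t\le 0$. For $j\in J$ (so $\lambda_{R,j}+\delta<0$), the imposed condition $P_{R,j}u(\cdot,0)=0$ forces $u_j(0)=0$, giving the Duhamel formula
\begin{equation}
u_j(t)= -\int_t^0 e^{-\lambda_{R,j}(t-s)}f_j(s)\,ds,
\end{equation}
while for $j\notin J$ (so $\lambda_{R,j}+\delta>0$) the only choice compatible with $\|u\|_{L^{2,\delta}_R}<\infty$ is
\begin{equation}
u_j(t)= \int_{-\infty}^t e^{-\lambda_{R,j}(t-s)}f_j(s)\,ds.
\end{equation}
Uniqueness is then immediate: any homogeneous solution satisfies $u_j(t)=c_je^{-\lambda_{R,j}t}$, and membership in $L^{2,\delta}_R$ forces $c_j=0$ for $j\notin J$ while the projection hypothesis forces $c_j=0$ for $j\in J$.

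For the quantitative $L^{2,\delta}_R$ bound, the cleanest route is to substitute $v=e^{-\delta t}u$ and $\tilde f=e^{-\delta t}f$, yielding $\partial_t v-(\mathcal{L}-\delta)v=\tilde f$ with $\|\tilde f(\cdot,t)\|_{L^2(\Si_R)}\le\|f\|_{L^{2,\delta}_R}$. The eigenvalues of $\mathcal{L}-\delta$ are $-(\lambda_{R,j}+\delta)$, and since $\delta\ne-\lambda_j$ for any $j$ and $\lambda_{R,j}\searrow\lambda_j$, there is a uniform spectral gap $|\lambda_{R,j}+\delta|\ge c_0>0$ for all $j$ and all $R\ge R_0$. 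For each stable mode $j\notin J$, Cauchy--Schwarz gives
\begin{equation}
v_j(t)^2\le \frac{1}{|\lambda_{R,j}+\delta|}\int_{-\infty}^t e^{-(\lambda_{R,j}+\delta)(t-s)}\tilde f_j(s)^2\,ds,
\end{equation}
and summing over $j\notin J$, using $|\lambda_{R,j}+\delta|\ge c_0$ and Parseval, produces $\sum_{j\notin J}v_j(t)^2\le c_0^{-2}\|f\|_{L^{2,\delta}_R}^2$; the analogous bound for the finitely many unstable modes is immediate. Undoing the substitution yields $\|u\|_{L^{2,\delta}_R}\le C\|f\|_{L^{2,\delta}_R}$ with $C$ independent of $R$.

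To upgrade to the $X^\delta_R$ estimate, I would first promote the $L^{2,\delta}_R$ bound to an $L^\infty$ bound of the form $\|u\|_{L^\infty(\Si_R\times(t-1,t))}\le Ce^{\delta t}\bigl(\|f\|_{L^{2,\delta}_R}+\|f\|_{0,\alpha,\delta;R}\bigr)$ by parabolic Moser iteration (or a heat-kernel representation) applied on a slightly larger parabolic cylinder, and then invoke standard interior plus boundary parabolic Schauder estimates on $\Si_R\times(t-2,t)$ to obtain
\begin{equation}
\|u\|_{2,\alpha;R,t}\le C\bigl(\|f\|_{0,\alpha;R,t}+\|f\|_{0,\alpha;R,t+1}+\|u\|_{L^\infty(\Si_R\times(t-2,t))}\bigr).
\end{equation}
Multiplying by $e^{-\delta t}$ and taking the supremum over $t\le 0$ gives $\|u\|_{X^\delta_R}\le C\|f\|_{0,\alpha,\delta;R}$.

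The main difficulty I anticipate is ensuring all these constants are uniform in $R$. This is delivered, first, by the uniform spectral gap $\inf_{j,R\ge R_0}|\lambda_{R,j}+\delta|\ge c_0$ noted above, and second, by the uniform local geometry of $\Si$ from Remark~\ref{A bound g alpha}, which makes interior Schauder and Moser constants $R$-independent. For the boundary part of the Schauder estimate one uses that, for sufficiently large $R$, the geodesic sphere $\partial\Si_R$ is smooth and uniformly regular thanks to the asymptotically Euclidean structure of $\Si$ at infinity, so classical boundary Schauder theory applies with constants independent of $R$.
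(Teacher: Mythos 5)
Your proposal follows essentially the same route as the paper: mode decomposition with the Duhamel formulas split according to $j\in J$ versus $j\notin J$, a Cauchy--Schwarz estimate yielding the $L^{2,\delta}_R$ bound (your substitution $v=e^{-\delta t}u$ and uniform spectral gap $c_0$ are equivalent to the paper's choice of auxiliary exponents $\delta'$ and $\delta''$ sandwiching $\delta$), and a parabolic Schauder estimate on unit-scale cylinders to upgrade to $X^\delta_R$. The only cosmetic difference is your intermediate Moser iteration step, which is unnecessary since the local parabolic Schauder estimate already controls $C^{2,\alpha}$ directly in terms of $\|u\|_{L^2}$ on a slightly larger cylinder plus $\|f\|_{C^{\alpha}}$; otherwise the argument, including your explicit attention to boundary regularity and $R$-uniformity of constants, matches the paper.
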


\begin{proof}
   It suffices to solve
   \begin{equation}
       \partial_ t u_{R,i}+\lambda_{R,i}u_{R,i}=f_{R,i}\quad \text{on }\Si_R\times\mathbb{R}_-,
   \end{equation}
   where $u_{R,i}=(u,\phi_{R,i})$, and $f_{R,i}=(f,\phi_{R,i})$. Define
   \begin{equation}
       \begin{aligned}
           u_{R,j}( t):&=-\int_ t^0e^{\lambda_{R,j}(s- t)}f_{R,j}(s)ds,\quad j\in J,\\
           u_{R,j}( t):&=\int_{-\infty}^ t e^{\lambda_{R,j}(s- t)}f_{R,j}(s)ds,\quad j\in J^{c}=\mathbb{Z}_+\setminus J.
       \end{aligned}
   \end{equation}
   Note that the integral on the RHS is well-defined because $|f_{R,j}(s)|=|(f,\phi_{R,j})|\leq \|f(\cdot,s)\|_{L^2(\Si_R)}\leq \|f\|_{L^{2,\delta}_R}e^{\delta s}$. Define $u_R(\cdot, t)=\sum_{j=1}^\infty u_{R,j}( t)\phi_{R,j}(\cdot)$. By definition of $u_{R,j}( t)$ for $j\in J$, we have $P_{R,j}(u_R(\cdot,0))=0$ for $j\in J$. 
Choose $\delta'$ and $\delta''$ satisfying $\max_{j\in J}\{\lambda_{R,j}\}<-\delta'<-\delta<-\delta''<\min_{j\in J^{c}}\{\lambda_{R,j}\}$. Note that for $j\in J$,
\begin{align}
   u^{2}_{R,j}( t)\leq \int_ t^0 e^{2(\lambda^R_j+\delta')(s- t)}ds\int_{ t}^0e^{-2\delta'(s- t)}|f_{R,j}(s)|^2ds\leq C\int_ t^0e^{-2\delta'(s- t)}|f_{R,j}(s)|^2ds
\end{align}
and for $j\in J^{c}$,
\begin{align}
   u_{R,j}^{2}( t)\leq\int_{-\infty}^ t e^{2(\lambda_{R,j}+\delta'')(s- t)ds}\int_{-\infty}^ t e^{-2\delta''(s- t)}|f_{R,j}(s)|^2ds\leq C\int_{-\infty}^ t e^{-2\delta''(s- t)}|f_{R,j}(s)|^2ds.
\end{align}
Using $\|f(s)\|_{L^2(\Si_R)}\leq \|f\|_{L_R^{2,\delta}}e^{\delta s}$, we then have
\begin{align}
   \|u_R(\cdot, t)\|^2=\sum_j u^{2}_{R,j}( t) 
   \leq& C\left(\int_ t^0e^{-2\delta'(s- t)}\|f(s)\|_{L^2(\Si_R)}^{2} ds+\int_{-\infty}^ t e^{-2\delta''(s- t)}\|f(s)\|_{L^2(\Si_R)}^{2}ds\right)\\
   \leq & C\left(\int_ t^0e^{-2\delta'(s- t)}\|f\|_{L_R^{2,\delta}}^{2}e^{2\delta s} ds+\int_{-\infty}^ t e^{-2\delta''(s- t)}\|f\|_{L_R^{2,\delta}}^{2}e^{2\delta s}ds\right)\\
   = &C\|f\|^{2}_{L_R^{2,\delta}}e^{2\delta t}\left(\int_{ t}^0 e^{2(\delta-\delta')(s- t)}ds+\int_{\infty}^ t e^{2(\delta-\delta'')(s- t)}ds\right)\\
   \leq &C\|f\|^{2}_{L_R^{2,\delta}}e^{2\delta t}.
\end{align}
Hence, $\|u_R\|_{L_R^{2,\delta}}\leq C\|f\|_{L_R^{2,\delta}}$.

By the interior parabolic Schauder estimates, we obtain that for any $ t\leq 0$ and $x\in \Si_R$,
\begin{align}
    \|u_R\|_{C^{2,\alpha}(B_1(x)\times ( t-1, t))}\leq C (\|u_R\|_{L^2(B_2(x)\times( t-2, t) )}+\|f\|_{C^{0,\alpha}(B_2(x)\times ( t-2, t))}). 
\end{align}
We choose a covering of $\Si_R$ and sum to get
\begin{align}
   \|u_R\|_{C^{2,\alpha}(\Si_R\times ( t-1, t))}\leq C (\|u_R\|_{L^2(\Si_R\times( t-2, t) )}+\|f\|_{C^{0,\alpha}(\Si_R\times ( t-2, t))}).
\end{align}
Multiplying by $e^{-\delta t}$ and taking the supremum for $ t\leq 0$, we obtain
\begin{align}
   \|u_R\|_{2,\alpha,\delta;R}\leq C (\|u_R\|_{L_R^{2,\delta}}+\|f\|_{0,\alpha,\delta;R})\leq C(\|f\|_{L_R^{2,\delta}}+\|f\|_{0,\alpha,\delta;R})\leq C\|f\|_{0,\alpha,\delta; R},
\end{align}
which completes the proof.
\end{proof}

Let $\mathbf{a} = (a_1, \ldots, a_I) \in \mathbb{R}^I$. For a given integer subset $J \subset \{1, \ldots, I\}$, we define the operator $\iota_{R}^{J}: \mathbb{R}^I \to L^2(\Si_R\ti \mathbb{R}_-) $ by
\begin{align}
    \iota_{R}^{J}(\mathbf{a}) := \sum_{j \in J} a_je^{-\lambda_{R,j} t} \phi_{R,j}.
\end{align}
Recall $L=\#\{\lambda_{1},\lambda_{2},\cdots,\lambda_{I}\}$ is the number of distinct negative eigenvalues of $\mathcal L$ on $\Si$, and $\lambda'_{i}$ is the $i$-th eigenvalue of $\cL$ on $\Si$ counted without multiplicity. Take $\epsilon>0$ small enough to be determined, such that $\epsilon<\frac{1}{10}\min\{\lambda_l'-\lambda'_{l-1}|l=2,\cdots,L\}$. For each $l = 1, \cdots, L$, we define
\begin{align}
J_{R}^{(l)} =& \{j : \lambda_{R,j} \in(\lambda'_{l}-\epsilon,\lambda'_{l}+\epsilon) \}=:\{i_{R,l},i_{R,l}+1,\cdots,i_{R,l+1}-1\}.
\end{align}
By the choice of $\epsilon$, the intervals $(\lambda'_{l}-\epsilon,\lambda'_{l}+\epsilon)$ ($l=1,\cdots,L$) are disjoint. Thus $J_R^{(l)}$ forms a disjoint partition of $\{1,2,\cdots, I_R=I\}$. Since $\lambda_{R,i}\to\lambda_{i}$ as $R\to\infty$, we can choose  $R_0$ large enough, such that 
\begin{equation}
 J_R^{(l)}=J^{(l)}=\{j:\lambda_j=\lambda'_{l}\}=:\{i_l,i_l+1,\cdots,i_{l+1}-1\}   
\end{equation}
being independent of $R\geq R_0$. Moreover, we can choose $\delta_l>0$ ($l=1,2,\cdots,L$) different from each $-\lambda'_{l}$, such that
\begin{equation}
\begin{aligned}
-\delta_{l-1}<\max\{2(\lambda'_{l}+\epsilon),\lambda'_{l-1}+\epsilon\}<-\delta_{l}<\lambda'_{l}-\epsilon,\quad l=1,\cdots,L,
\end{aligned}
\end{equation}
holds for all $R\geq R_0$ large (thus for $R=\infty$), where we use the convention $-\delta_{0}=\lambda_{R,0}=-\infty$. 

We then denote $X_{R}^{(l)} = X_{R}^{\delta_{l}}$, $\iota_{R}^{(l)} = \iota_{R}^{J_{R}^{(l)}}$, and $P_{R}^{(l)} := \sum_{j : \lambda_{R,j} < -\delta_{l}} P_{R,j}$. In what follows, the notation $\lesssim$ indicates inequalities that hold up to multiplicative constants dependent on $\Si$.

\begin{theorem}\label{anc R}
There exists a number $\varepsilon_0(\Si)>0$, independent of $R\geq R_0$, such that for any $\mathbf{a}=(a_1,\ldots,a_{I})\in\mathbb{R}^{I}$ with $|\mathbf{a}|<\varepsilon_0$, we can uniquely determine a set of functions $\{u^{(l)}_R\}_{l=1}^{L}$, depending continuously on $\mathbf{a}$: For each $l = 1, \ldots, L$, the sum $\sum_{j=1}^{l} u^{(j)}_R$ forms an ancient solution of \eqref{eq:MR} satisfying $u^{(l)}_R-\iota^{(l)}_R(\mathbf{a})\in X^{(l)}_R$ and the terminal condition $P^{(l)}_R(u^{(l)}_R-\iota^{(l)}_R(\mathbf{a}))(\cdot,0)=0$. Furthermore,
\begin{equation}\label{m limit}
\lim_{ t\to-\infty}e^{\lambda_{R,m} t}(u^{(l)}_R(\cdot, t),\phi_{R,m})=a_m \quad \text{for each } m \in J^{(l)}.
\end{equation}

\end{theorem}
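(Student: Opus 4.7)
The plan is to construct the sequence $\{u_R^{(l)}\}_{l=1}^L$ inductively on $l$ by a Banach fixed-point argument in each space $X_R^{(l)}$. Write $U_R^{(l)} := \sum_{j=1}^l u_R^{(j)}$ and decompose $u_R^{(l)} = \iota_R^{(l)}(\mathbf{a}) + w_R^{(l)}$ with $w_R^{(l)} \in X_R^{(l)}$ and $P_R^{(l)} w_R^{(l)}(\cdot,0) = 0$. The base case $U_R^{(0)} = 0$ trivially satisfies $(\partial_t - \mathcal{L}) U_R^{(0)} = 0 = E(0)$. At the inductive step, since $\iota_R^{(l)}(\mathbf{a})$ is a finite linear combination of $e^{-\lambda_{R,j}t}\phi_{R,j}$ and each such term is killed by $\partial_t - \mathcal{L}$, the requirement that $U_R^{(l)}$ also solves $(\partial_t - \mathcal{L})U = E(U)$ reduces to
\begin{align}
(\partial_t - \mathcal{L}) w_R^{(l)} = E\bigl(U_R^{(l-1)} + \iota_R^{(l)}(\mathbf{a}) + w_R^{(l)}\bigr) - E\bigl(U_R^{(l-1)}\bigr) =: N_l\bigl(w_R^{(l)}\bigr).
\end{align}

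Next I would set up the fixed-point map. For $w$ in a small closed ball $\overline{B}_{\eta_l}(0) \subset X_R^{(l)}$, define $\mathcal{T}_l(w)$ as the unique solution of $(\partial_t - \mathcal{L}) v = N_l(w)$ in $X_R^{(l)}$ with $P_R^{(l)} v(\cdot, 0) = 0$, produced by Lemma \ref{app solu}. From \eqref{diff holder} of Lemma \ref{lem:E} one has
\begin{align}
\|N_l(w)\|_{C^\alpha(B_1(x) \times (t-1,t))} \leq C \bigl\|\iota_R^{(l)}(\mathbf{a}) + w\bigr\|_{C^{2,\alpha}}\Bigl(\|U_R^{(l-1)}\|_{C^{2,\alpha}} + \bigl\|\iota_R^{(l)}(\mathbf{a}) + w\bigr\|_{C^{2,\alpha}}\Bigr).
\end{align}
The inductive hypothesis will carry the bounds $\|U_R^{(l-1)}\|_{2,\alpha;R,t} \lesssim |\mathbf{a}| e^{-\lambda'_1 t}$, $\|\iota_R^{(l)}(\mathbf{a})\|_{2,\alpha;R,t} \lesssim |\mathbf{a}| e^{-\lambda'_l t}$, and $\|w\|_{2,\alpha;R,t} \leq \eta_l e^{\delta_l t}$. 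The defining inequalities $\max\{2(\lambda'_l+\epsilon), \lambda'_{l-1}+\epsilon\} < -\delta_l < \lambda'_l - \epsilon$ are precisely what is needed so that the slowest cross-term $e^{-(\lambda'_1+\lambda'_l)t}$ and the leading quadratic term $e^{-2\lambda'_l t}$ are both absorbed into a finite $\|\cdot\|_{0,\alpha,\delta_l;R}$ norm. Feeding this into Lemma \ref{app solu} produces $\|\mathcal{T}_l(w)\|_{X_R^{(l)}} \leq C(|\mathbf{a}|^2 + |\mathbf{a}|\eta_l + \eta_l^2)$ with constants uniform in $R \geq R_0$, and a parallel computation yields $\|\mathcal{T}_l(w_1) - \mathcal{T}_l(w_2)\|_{X_R^{(l)}} \leq C(|\mathbf{a}| + \eta_l)\|w_1 - w_2\|_{X_R^{(l)}}$. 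Choosing $\eta_l = K|\mathbf{a}|$ and $|\mathbf{a}| < \varepsilon_0$ sufficiently small, the Banach fixed-point theorem supplies a unique $w_R^{(l)}$ depending continuously on $\mathbf{a}$, completing the inductive step.

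The asymptotic formula \eqref{m limit} then follows directly: for $m \in J^{(l)}$,
\begin{align}
e^{\lambda_{R,m} t}\bigl(u_R^{(l)}(\cdot, t), \phi_{R,m}\bigr) = a_m + e^{\lambda_{R,m} t}\bigl(w_R^{(l)}(\cdot, t), \phi_{R,m}\bigr) = a_m + O\bigl(e^{(\delta_l + \lambda_{R,m}) t}\bigr),
\end{align}
and the exponent $\delta_l + \lambda_{R,m}$ is strictly positive for $R$ large, since $\delta_l > -\lambda'_l + \epsilon$ and $\lambda_{R,m} \to \lambda'_l$. The main obstacle is the careful bookkeeping of decay rates across the induction: one must verify that every cross-term arising from $N_l$—especially the one pairing the slowest-decaying piece of $U_R^{(l-1)}$ (namely $\iota_R^{(1)}(\mathbf{a})$ at rate $e^{-\lambda'_1 t}$) with $\iota_R^{(l)}(\mathbf{a})$—has enough decay to lie in $X_R^{(l)}$, and that the inversion in Lemma \ref{app solu} produces constants independent of $R$. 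This uniformity is what allows the later passage $R \to \infty$ to extract a limiting ancient flow on all of $\Sigma$.
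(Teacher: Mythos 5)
Your proposal follows the paper's proof essentially step for step: induction on $l$, the same decomposition $u_R^{(l)} = \iota_R^{(l)}(\mathbf{a}) + w_R^{(l)}$, the same nonlinear remainder (your $N_l$ is the paper's $E^{(l+1)}$), the same use of Lemma~\ref{lem:E} to get the quadratic/Lipschitz estimates and Lemma~\ref{app solu} for the linear inversion, and the same route to \eqref{m limit}. The only genuine variant is cosmetic: you run the contraction on a ball of radius $\eta_l = K|\mathbf{a}|$, while the paper uses the $\mathbf{a}$-independent radius $\varepsilon_0$; both work, and yours gives a marginally sharper a posteriori bound on $w_R^{(l)}$.

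One bookkeeping slip worth flagging: since $\lambda'_1 < \lambda'_2 < \cdots < \lambda'_L < 0$, the eigenvalue $\lambda'_1$ is the \emph{most} negative, so $e^{-\lambda'_1 t}$ is the \emph{fastest}-decaying of the $\iota_R^{(j)}$ as $t\to-\infty$, not the slowest. The slowest-decaying piece of $U_R^{(l-1)}$ is therefore $\iota_R^{(l-1)}$ at rate $e^{-\lambda'_{l-1}t}$, and the inductive bound should read $\|U_R^{(l-1)}\|_{2,\alpha;R,t}\lesssim |\mathbf{a}|e^{-\lambda'_{l-1}t}$, not $|\mathbf{a}|e^{-\lambda'_1 t}$. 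Likewise the relevant "slowest cross-term" in $N_l$ is $e^{-(\lambda'_{l-1}+\lambda'_l)t}$, not $e^{-(\lambda'_1+\lambda'_l)t}$. The error is harmless to the conclusion because $\lambda'_{l-1}+\lambda'_l < 2\lambda'_l < 2(\lambda'_l+\epsilon) < -\delta_l$, so the true slowest cross-term and the quadratic term $e^{-2\lambda'_l t}$ both lie in $X^{\delta_l}_R$, exactly as the paper's constraint $\max\{2(\lambda'_l+\epsilon),\lambda'_{l-1}+\epsilon\}<-\delta_l$ guarantees — but you should fix the direction of the ordering.
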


\begin{proof} 
We prove this theorem using induction. The argument proceeds analogously to Theorem 3.2 in \cite{CS1}.

Write $w_R = u^{(1)}_R-\iota^{(1)}_R$. Then, finding the solution $u^{(1)}$ for \eqref{eq:MR} is equivalent to finding the solution $w$ for
\begin{equation}\label{fix eq}
    \left\{
    \begin{aligned}
          \partial_ t w_R&=\mathcal{L}w_R+E(\iota^{(1)}_R(\mathbf{a})+w_R)\quad\text{ in }\Si_R\times\mathbb{R}_-;\\
          w_R&=0\quad\text{ on }\partial \Si_R\times \mathbb{R}_-.
    \end{aligned}
    \right.
\end{equation}
We will find this $w_R$ using the contraction mapping theorem.  We omit $R$ in $u_R$, $w_R$, $\iota_R$ in the proof for simplicity.

\bigskip

First, utilizing \Cref{lem:E}, we claim the following.\\
{\bf Claim 1.} There exists a small $\varepsilon_0 > 0$ such that if $\|w\|_{X^{(1)}} + \|w_{1}\|_{X^{(1)}} + \|w_{2}\|_{X^{(1)}} + |\mathbf{a}| < \varepsilon_0$, then
\begin{equation}\label{E holder}
\|E(w+\iota^{(1)}(\mathbf{a}))\|_{0,\alpha,\delta_{1}} \leq C( \|w\|^{2}_{X^{(1)}} + |\mathbf{a}|^2)
\end{equation}
\begin{equation}\label{E diff holder}
\|E(w_{1}+\iota^{(1)}(\mathbf{a}))-E(w_{2}+\iota^{(1)}(\mathbf{a}))\|_{0,\alpha,\delta_{1}} \leq C\varepsilon_0\|w_{1}-w_{2}\|_{X^{(1)}},
\end{equation}
where $C$ depends only on $\Si,\alpha,\delta_1$.

\bigskip
    
Applying \Cref{lem:E}, we have
\begin{align}
\|E(w+\iota^{(1)}(\mathbf{a}))( t)\|_{C^{0,\alpha}} \leq C\|(w+\iota^{(1)}(\mathbf{a}))( t)\|_{C^{2,\alpha}}^2 
\leq C(\|w( t)\|_{C^{2,\alpha}}^2 + |\mathbf{a}|^2e^{-2\lambda_{R,1} t}).
\end{align}
By multiplying this inequality with $e^{-\delta_{1} t}$ and then taking the supremum, and noting that $-\delta_{1} > 2\lambda_{R,1}$, we obtain \eqref{E holder}.

The second inequality \eqref{E diff holder} follows in a similar way. Indeed, by \Cref{lem:E}, we obtain
\begin{align}
 \|E(w_{1}+\iota^1(\mathbf{a}))-E(w_{2}+\iota^{(1)}(\mathbf{a}))\|_{C^{0,\alpha}}
&\leq C(\|w_{1}\|_{C^{2,\alpha}}+\|w_{2}\|_{2,\alpha}+|\mathbf{a}|e^{-\lambda_{R,1} t})\|(w_{1}-w_{2})\|_{2,\alpha}\\
&\leq C\varepsilon_0\|(w_{1}-w_{2})( t)\|_{C^{2,\alpha}}
\end{align}
since $\|w_{1}( t)\|_{C^{2,\alpha}}+\|w_{2}( t)\|_{C^{2,\alpha}}+|\mathbf{a}|\leq \|w_{1}\|_{{2,\alpha,\delta_{1}}}e^{\delta_{1} t}+\|w_{2}\|_{{2,\alpha,\delta_{1}}}e^{\delta_{R,1} t}+|\mathbf{a}|<\varepsilon_0$.
Then, by multiplying with $e^{-\delta_{1} t}$ and taking the supremum, we obtain \eqref{E diff holder}.

\bigskip

We are now ready to use the contraction mapping theorem. 
Define the map $S: \{f \in X^{(1)} : \|f\|_{X^{(1)}} < \varepsilon_0\} \to X^{(1)}$ by setting $S(w) = v$, where $v$ is the solution of
\begin{equation}
\left\{
\begin{aligned}
\partial_ t v - \mathcal{L}v = E(w + \iota^{(1)}(\mathbf{a})) \qu\text{in } \Si_R \times \mathbb{R}_-\\
v=0\quad\text{on }\partial \Si_R\times \mathbb{R}_-.
\end{aligned}
\right.
\end{equation}
with $P_R^{(1)}(v(\cdot, 0)) = 0$. By \Cref{app solu}, such $v$ is unique, making $S(w)$ well-defined. Furthermore, \Cref{app solu} and the claim imply
\begin{align}
\|S(w)\|_{X^{(1)}} &\leq C\|E(w + \iota^{(1)}(\mathbf{a}))\|_{0,\alpha,\delta_{1};R} \leq C(\|w\|^{2}_{X^{(1)}} + |\mathbf{a}|^2)
\\
\|S(w_1) - S(w_2)\|_{X^{(1)}} &\leq C\|E(w_1 + \iota^{(1)}(\mathbf{a})) - E(w_2 + \iota^{(1)}(\mathbf{a}))\|_{0,\alpha,\delta_{1}} \leq  C\varepsilon_0\|w_1 - w_2\|_{X^{(1)}}.
\end{align}

If $\varepsilon_0$ is chosen to be sufficiently small, then $S$ becomes a contraction mapping on the set $\{f \in X^{(1)} \mid \|f\|_{X^{(1)}}  < \varepsilon_0\}$. Consequently, we can find a fixed point $w$, which satisfies (\ref{fix eq}), and we define $u^{(1)} := w + \iota^{(1)}(\mathbf{a})$. 

Since $|\phi_{R,m}(x)|\leq C\phi^{(1-\varepsilon)\frac{\lambda_{R,m}}{\lambda}}(x)\leq Ce^{-c_m\rho(x)}$ for some $c_m>0$ by Lemma \ref{quo boun} and the exponential decay of $\phi$ \footnote{The results for eigenfunctions of Schr\"odinger operator in Euclidean space is classical, see \cite{A} or Chapter 3 of \cite{HS} for details. The proof for a complete manifold $M$ with metric $g_{ij}$ close to the standard Euclidean metric and linear decay for the potential function $|A|^2$ (Remark \ref{A bound g alpha}) follows almost the same.}, $\int_\Si|\phi_{R,m}| \, dvol_\Si\leq C$ for some uniform constant independent of $m$ and $R$. On the other hand, since $|w(\cdot, t)| \leq \|w\|_{0,\delta_{1}} e^{\delta_{1} t} \leq \varepsilon_0 e^{\delta_{1} t}$, we have
\begin{equation}
\lim_{ t \to -\infty} e^{\lambda_{R,m} t}(u^{(1)}, \phi_{R,m})  = \lim_{ t \to -\infty} \varepsilon_0e^{(\delta_{1} + \lambda_{R,m}) t} + a_m = a_m, \quad m \in J_R^{(1)}.
\end{equation}
Thus, we have found the desired $u^{(1)}$.

Suppose that we have found $u^{(1)}_R$ up to $u^{(l)}_R$. If $J^{(l+1)}_R=\emptyset$, we are done. Otherwise, let $u^{(l+1)}_R=w+\iota^{(l+1)}(a)_R$, it suffices to require
\begin{equation}
\left\{
\begin{aligned}
    \partial_ t w_R=\mathcal Lw_R+E^{(l+1)}(w_R)\\
    w_R=0\quad\text{ on }\partial \Si_R\times \mathbb{R}_-,
\end{aligned}
\right.
\end{equation}
where
\begin{align}
    E^{(l+1)}(w_R)=E(w+\iota ^{(l+1)}_R(\bfa)+\sum_{j=1}^l u^{(j)}_R)-E(\sum_{j=1}^l u^{(j)}_R).
\end{align}
As before, we omit $R$ in the proof if there is no confusion.

{\bf Claim 2.} There exists $\varepsilon_0$ small such that if $\|w\|_{X^{(l+1)}}+\|w_1\|_{X^{(l+1)}}+\|w_2\|_{X^{(l+1)}}+|\bfa|<\varepsilon_0$, then
\begin{align}\label{El hold}
    \|E^{(l+1)}(w)\|_{{0,\alpha,\delta_{l+1}}}&\leq C(\|w\|^2_{X^{(l+1)}}+\varepsilon_0^2)\\
\label{El diff hold}
    \|E^{(l+1)}(w_1)-E^{(l+1)}(w_2)\|_{{0,\alpha,\delta_{l+1}}}&\leq C\varepsilon_0\|w_1-w_2\|_{X^{(l+1)}}.
\end{align}
Indeed, since $J^{(l+1)}=\{i_{l+1},i_{l+1}+1,\cdots,i_{l+2}-1\}$, it follows from \eqref{diff holder} and $u^{(j)}\in X^{(\delta_{j})}$ with $\|u^{(j)}\|_{X^{\delta_{j}}}<\varepsilon_0$ ($j=1,2,\cdots,l$) that
\begin{equation}
    \begin{aligned}
        \|E^{(l+1)}(w)( t)\|_{C^{0,\alpha}}
        \leq& C(\|w( t)\|_{C^{2,\alpha}}+|\bfa|e^{-\lambda_{R,i_{l+2}-1} t}+\varepsilon_0e^{\delta_l t})(\|w( t)\|_{C^{2,\alpha}}+|\bfa|e^{-\lambda_{R,i_{l+2}-1} t})\\
        \leq& C(\|w( t)\|_{C^{2,\alpha}}+\varepsilon_0e^{-\lambda_{R,i_{l+2}-1} t})^2\quad (\text {since } \delta_l>-\lambda_{R,i_{l+2}-1})\\
        \leq& Ce^{-2\lambda_{R,i_{l+2}-1} t}(\|w\|_{{2,\alpha,-\lambda_{R,i_{l+2}-1}}}+\varepsilon_0)^2\\
        \leq& Ce^{-2\lambda_{R,i_{l+2}-1} t}(\|w\|^2_{{2,\alpha,\delta_{l+1}}}+\varepsilon_0^2),\quad  (\text {since } \delta_{l+1}>-\lambda_{R,i_{l+2}-1}).
    \end{aligned}
\end{equation}
Since $\max\{2\lambda_{R,i_{l+2}-1},\lambda_{R,i_{l+1}-1}\}<-\delta_{l+1}$, this implies that
\begin{equation}
 \|E^{(l+1)}(w)\|_{{0,\alpha,\delta_{l+1}}}\leq C(\|w\|^2_{{2,\alpha,\delta_{l+1}}}+\varepsilon_0^2)=C(\|w\|^2_{X^{(l+1)}}+\varepsilon_0^2),
\end{equation}
which proves \eqref{El hold}.

On the other hand, since $\delta_l>\delta_{l+1}>-\lambda_{R,i_{l+2}-1}$, we obtain
\begin{equation}
\begin{aligned}
    &\fr{\|E^{(l+1)}(w_1)( t)-E^{(l+1)}(w_2)( t)\|_{C^{0,\alpha}}}{\|w_1( t)-w_2( t)\|_{C^{2,\alpha}}}
    \leq \|w_1( t)+w_2( t)+2\iota^{(l+1)}(\bfa)( t)+2\sum_{j=1}^lu^{(j)}( t)\|_{C^{2,\alpha}}\\
    &\leq C(\|w_1\|_{X^{(l+1)}}e^{\delta_{l+1} t}+\|w_2\|_{X^{(l+1)}}e^{\delta_{l+1} t}+|\bfa|e^{-\lambda_{R,i_{l+2}-1} t}+\varepsilon_0e^{\delta_{l} t})
    \leq C\varepsilon_0e^{-\lambda_{R,i_{l+2}-1}  t}
    \leq C\varepsilon_0,
\end{aligned}
\end{equation}
which proves \eqref{El diff hold}.

Now, we can define a map $S:\{f\in X^{(l+1)}:\|f\|_{X^{(l+1)}}<\varepsilon_0\}\to X^{(l+1)}$ by $S(w)=v$, where $v$ is defined as the unique solution of
\begin{equation}
    \partial_ t v-\mathcal L v=E^{(l+1)}(w)\quad\text{ on }\Si_R\times\mathbb R_-,
\end{equation}
with $P^{(l+1)}(u(\cdot,0))$=0. Taking $\varepsilon_0$ small enough, $S$ is a contraction mapping. Since $w^{(l+1)}\in X^{(\delta_{l+1})}$, and $\delta_{l+1}>-\lambda_{R,i_{l+2}-1}$, we have \eqref{m limit} holds for $m\in J^{(l+1)}$. The existence of $u^{(l+1)}$ is established. Finally, the uniqueness and continuity of $u^{(l+1)}$ also follow from the contraction mapping theorem.
\end{proof}

\subsection{Ancient solutions on $\Si$}

In this subsection, we construct ancient solutions on \(\Sigma\) using the approximate solutions on \(\Sigma_R\) obtained in the previous subsection.

\begin{theorem}\label{thm:exist}
There exists a $\varepsilon_0(\Si)>0$ such that for any $\mathbf{a}=(a_1,\ldots,a_I)\in\mathbb{R}^I$ with $|\mathbf{a}|<\varepsilon_0$, we can uniquely determine a set of functions $\{u^{(l)}\}_{l=1}^{L}$, depending continuously on $\mathbf{a}$: For each $l = 1, \ldots, L$, the graph of sum $\sum_{j=1}^{l} u^{(j)}$ forms an ancient solution of the mean curvature flow. Moreover, we have $u^{(l)}-\iota^{(l)}(\mathbf{a})\in X^{(l)}$ and the terminal condition $P^{(l)}(u^{(l)}-\iota^{(l)}(\mathbf{a}))(\cdot,0)=0$. Furthermore,
\begin{equation}
\lim_{ t\to-\infty}e^{\lambda_m t}(u^{(l)}(\cdot, t),\phi_m)=a_m \quad \text{for each } m \in J^{(l)}.
\end{equation}
\end{theorem}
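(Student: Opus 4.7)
The natural plan is to obtain $u^{(l)}$ as a subsequential limit of the Dirichlet solutions $u^{(l)}_R$ from Theorem \ref{anc R}, using that the contraction constant $\varepsilon_0$ there was chosen independently of $R\ge R_0$. Once the limit is constructed, all the required properties (PDE, membership in $X^{(l)}$, terminal condition, asymptotic extraction of $a_m$) are inherited from the corresponding uniform properties on $\Sigma_R$.

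\textbf{Step 1: Uniform estimates and compactness.} For each $l$, Theorem \ref{anc R} yields $u^{(l)}_R - \iota^{(l)}_R(\mathbf{a}) \in X^{(l)}_R$ with $\|u^{(l)}_R - \iota^{(l)}_R(\mathbf{a})\|_{X^{(l)}_R} \le C|\mathbf{a}|^2$ and $C$ independent of $R\ge R_0$. Combined with the pointwise bound $|\phi_{R,j}|\le C\phi^{(1-\varepsilon)\lambda_{R,j}/\lambda}$ from Lemma \ref{quo boun} (and Theorem \ref{2 alpha point quo} for higher derivatives), we obtain that on any compact parabolic cylinder $B_\rho(x_0)\times [t-1,t]$ in $\Sigma\times\mathbb{R}_-$ the functions $u^{(l)}_R$ are uniformly bounded in $C^{2,\alpha}$ once $R$ is large enough that $B_\rho(x_0)\subset \Sigma_{R/2}$. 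A standard Arzel\`a–Ascoli/diagonal argument then extracts a subsequence $R_k\to\infty$ and a limit $u^{(l)} \in C^{2,\alpha/2}_{\text{loc}}(\Sigma\times\mathbb{R}_-)$ with $u^{(l)}_{R_k}\to u^{(l)}$ locally.

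\textbf{Step 2: Passing to the limit in the equation and in $X^{(l)}$.} Since $\phi_{R,j}\to\phi_j$ and $\lambda_{R,j}\to\lambda_j$ in $C^\infty_{\text{loc}}$, we have $\iota^{(l)}_{R_k}(\mathbf{a})\to\iota^{(l)}(\mathbf{a})$ locally, so $u^{(l)}-\iota^{(l)}(\mathbf{a})$ inherits the estimate $\|u^{(l)}-\iota^{(l)}(\mathbf{a})\|_{X^{(l)}}\le C|\mathbf{a}|^2$ by lower semicontinuity of the weighted $C^{2,\alpha}$ norm under pointwise convergence. The equation $\partial_t u = \mathcal{L}u + E(u)$ passes to the limit on any compact set because $E$ is a smooth function of $(u,\nabla u,\nabla^2 u)$ and convergence is in $C^2_{\text{loc}}$. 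Parabolic Schauder theory then upgrades $u^{(l)}$ to a smooth classical solution.

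\textbf{Step 3: Terminal condition and asymptotics.} For $j\in J$ with $-\lambda'_l < -\delta_l$, one has $P^{(l)}_R(u^{(l)}_R-\iota^{(l)}_R(\mathbf{a}))(\cdot,0)=0$, i.e.\ $\int_\Sigma (u^{(l)}_R-\iota^{(l)}_R(\mathbf{a}))(\cdot,0)\phi_{R,j}\,dvol_\Sigma = 0$. The key point is that by Lemma \ref{quo boun} and the uniform weighted bound of Step 1, the integrands decay like $\phi^{c}$ for some positive $c$, and $\int_\Sigma\phi^c\,dvol_\Sigma<\infty$ by the exponential decay of $\phi$ (footnote to Lemma \ref{quo boun}). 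This provides the integrable majorant needed to invoke dominated convergence and obtain $P^{(l)}(u^{(l)}-\iota^{(l)}(\mathbf{a}))(\cdot,0)=0$. The same dominated convergence argument, combined with the estimate $|u^{(l)}-\iota^{(l)}(\mathbf{a})|(\cdot,t)\le Ce^{\delta_l t}$ and $\delta_l > -\lambda'_l = -\lambda_m$ for $m\in J^{(l)}$, gives $e^{\lambda_m t}(u^{(l)}(\cdot,t),\phi_m)\to a_m$ as $t\to -\infty$.

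\textbf{Step 4: Uniqueness and continuous dependence.} The main obstacle I expect is uniqueness, since the subsequential-limit procedure itself does not select a canonical solution. To resolve this I would re-run the contraction mapping argument of Theorem \ref{anc R} directly on $\Sigma$: the only ingredient used there that is special to $\Sigma_R$ is Lemma \ref{app solu}, which can be proved on $\Sigma$ by the same spectral decomposition into the (finite) negative eigenspaces plus the complement, using $\phi_j$ instead of $\phi_{R,j}$ and using that the essential spectrum of $\mathcal{L}$ is bounded below by $0$ (since $|A|^2\to 0$ at infinity by Remark \ref{A bound g alpha}) so the positive-spectrum semigroup bound $\|u\|\le C\|f\|e^{\delta t}$ holds on $L^2(\Sigma)$ for any $\delta$ not in $\{-\lambda_j\}$. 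Once Lemma \ref{app solu} is available on $\Sigma$, the contraction scheme reproduces $u^{(l)}$ uniquely in the ball $\{\|\cdot\|_{X^{(l)}}<\varepsilon_0\}$ of $X^{(l)}$, and continuous dependence on $\mathbf{a}$ is the standard parameter-dependence of the fixed point. The subsequential limit from Steps 1–3 supplies existence, so together this gives the full statement.
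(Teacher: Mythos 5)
Your Steps 1--3 are essentially the paper's proof: the authors also pass to a subsequential $C^{2,\alpha}_{\mathrm{loc}}$ limit of the $u^{(l)}_R$, using the $R$-uniform bounds $\|w_R\|_{X^{(l)}}<\varepsilon_0$ together with Lemma \ref{quo boun}/Theorem \ref{2 alpha point quo} to control $\iota^{(l)}_R(\mathbf a)$, observe that $\|w^{(l)}_R\|_{X^\delta_R}=\|w^{(l)}_R\|_{X^\delta}$ since $w^{(l)}_R$ extends by zero, and verify the asymptotics $e^{\lambda_m t}(u^{(l)},\phi_m)\to a_m$ via the exponential decay and $L^1$-integrability of $\phi_m$. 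One small inaccuracy: the bound $\|w_R\|_{X^{(l)}_R}\le C|\mathbf a|^2$ you quote in Step 1 is not stated in Theorem \ref{anc R} itself (it follows from the contraction estimate, and for $l\ge 2$ the paper's Claim 2 gives $C(\|w\|^2+\varepsilon_0^2)$, so the right-hand side involves $\varepsilon_0^2$ rather than $|\mathbf a|^2$ at that stage); the sharper $|\mathbf a|^2$-type bound is only established later in Theorem \ref{thm:decay and finite mass}. This does not affect the argument, since only boundedness is needed for compactness.

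The genuine difference is your Step 4. The paper's proof of Theorem \ref{thm:exist} constructs $u^{(l)}$ only as a subsequential limit and does not explicitly argue uniqueness or continuous dependence on $\mathbf a$ in the statement on $\Sigma$; these properties are established for the Dirichlet problems in Theorem \ref{anc R} but are not carried through the limit. Your proposal to re-run the fixed-point argument directly on $\Sigma$, after first proving a version of Lemma \ref{app solu} on $\Sigma$ using the spectral decomposition $L^2(\Sigma)=H_-\oplus H_+$ (finite-dimensional negative eigenspace plus the positive spectral subspace, with $\mathrm{ess\,spec}(-\mathcal L)\subset[0,\infty)$ since $|A|^2\to 0$), is a sound and arguably cleaner way to obtain the claimed uniqueness and continuity: the positive-spectrum part is handled by the contraction semigroup $e^{t\mathcal L}|_{H_+}$ without any eigenfunction expansion, and the finite negative part is integrated backward exactly as in the $\Sigma_R$ case. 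This buys you a canonical solution and shows that the full family $u^{(l)}_R$ converges (not just a subsequence), at the modest cost of reproving Lemma \ref{app solu} in the non-compact setting. In short, your approach matches the paper's where the paper gives detail, and fills in the uniqueness/continuity argument that the paper leaves implicit.
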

\begin{proof}
    
Consider the ancient solutions $u^{(l)}_{R}$ of (\ref{eq:MR}) constructed in \Cref{anc R}. We first note that since $w_R^{l}\equiv 0$ on $(\Si\setminus \Si_R)\times\mathbb R_-$, which implies that $\|w^{(l)}_R\|_{X^\delta_R}=\|w^{(l)}_R\|_{X^\delta}$.  For $l=1$, we have $u_{R}^{(1)} = w_R + \iota_R^{(1)}(\mathbf{a})$, where $\|w_R\|_{{2,\alpha,\delta_{1}}} < \varepsilon_0$. This and the boundedness of $\phi_{R,1}$ (Lemma \ref{quo boun}) implies 
\begin{align}
\|u_R^{(1)}\|_{{2,\alpha}} &\leq \|w_R( t)\|_{{2,\alpha}} + \|\iota_R^{(1)}(\mathbf{a})\|_{{2,\alpha}}\leq\varepsilon_0e^{\delta_{1} t}+C|\mathbf a|e^{-\lambda_{R,1}  t}\leq C\varepsilon_0, 
\end{align} 
for sufficiently large $R\geq R_0$, since $\delta_{1}>-\lambda_{R,1}$ and $|\mathbf a|<\varepsilon_0$. Thus, we can extract a convergent subsequence $u_{R_j}^{(1)}$ from $u_R^{(1)}$, which converges in ${C}_{loc}^{2,\alpha}(\Si\times\mathbb{R}_-)$ to a function $u^{(1)}$ as $R_j \to \infty$ (as $j \to \infty$) and $u^{(1)}$ satisfies
\begin{equation}
    \partial_ t u^{(1)} = \mathcal{L}u^{(1)} + E(u^{(1)}) \quad \text{on } \Si \times \mathbb R_-.
\end{equation}
Note that $\ind(\Si_{R_k})=\ind(\Si)$ for sufficiently large $R_k$, $\lambda_{R_k,j} \searrow \lambda_j$ and $\phi_{R_k,j} \to \phi_j$ in $C^\infty_{loc}(\Si)$ as $k \to \infty$. It follows that $\iota_R^{(1)}(\mathbf{a})$ converges to $\sum_{j\in J^{(1)}}a_je^{-\lambda_j t}\phi_j$ in ${C}_{loc}^{2,\alpha}(\Si\times\mathbb{R}_-)$. Consequently, defining $w^{(1)} = u^{(1)} - \iota^{(1)}(\mathbf{a}) $, we have  $w_{R_k}^{(1)}\to w^{(1)}$ in the same space. Since $\|w_{R_k}^{(1)}( t)\|_{C^{2,\alpha}} \leq \varepsilon_0 e^{\delta_1 t}$, we have $\|w^{(1)}( t)\|_{C^{2,\alpha}(\Si)}\leq \varepsilon_0 e^{\delta_1 t}$. 

On the other hand, the exponential decay of $\phi$, Lemma \ref{quo boun}, and $\phi_{R,m}\to \phi_m$ implies the exponential decay of $\phi_m$ and thus $\|\phi_m\|_{L^1(M)}<\infty$. We have
\begin{equation}
\lim_{ t \to -\infty} e^{\lambda_m  t} (u^{(1)}, \phi_m) 
=\lim_{ t \to -\infty} e^{\lambda_m  t} (\iota^{(1)}(\bfa), \phi_m)
     =a_m, \quad m \in J^{(1)}.
\end{equation}
This completes the proof for the case $l=1$.

The case for \(l \geq 2\) can be obtained similarly.
\end{proof}


\section{Space-time asymptotics of ancient solutions, mean convexity, and finite mass drop}\label{sec asymp}

\subsection{Space-time decay and finite mass drop of ancient solutions}
Our goal in this subsection is to show the space-time decay in Theorem \ref{thm:main.I_parameter}. Specifically, we prove the following:

\begin{theorem}\label{thm:decay and finite mass}
    Let $\{u^{(l)}\}_{l=1}^{L}$ be the functions in Theorem \ref{thm:exist}. Given a small $\varepsilon>0$, there exists $C(\varepsilon,M)>0$, independent of $(x,t)\in M\times\mathbb R_-$, such that for any $l\in\{1,\cdots,L\}$, we have
    \begin{equation}\label{uk alpha}
    \|u^{(l)}\|_{C^{2,\alpha}(B_1(x)\times ( t-1, t))}\leq C|\bfa|e^{-2\lambda'_l t}\phi(x)^{(1-\varepsilon)\frac{\lambda'_l}{\lambda}}, \quad (x, t)\in \Si\times \mathbb R_-.
\end{equation}
In particular, combined with exponential decay of $\phi$, we have for $u=\sum_{l=1}^{L}u^{(l)}$ that 
\begin{equation}
\begin{aligned}
     \int_{-\infty}^0\int_{M_ t} H^2 dvol_{M_ t}d t\leq&C|\bfa|^2\int_{-\infty}^0\int_{\mathbb{R}^n} \sum_{l=1}^{L}e^{-2\lambda'_l t}e^{-\varepsilon'_l|y|}dy  d t<\infty
\end{aligned}
\end{equation}
and
\begin{equation}
    \int_{M_t}|A|^ndvol_{M_t}<Ce^{-\varepsilon't}<\infty,\quad t\in\mathbb R_-,
\end{equation}
for some $\varepsilon_l',\varepsilon'>0$.
\end{theorem}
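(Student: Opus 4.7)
My plan is to upgrade the time-exponential decay of $u^{(l)}$ in the $X^{(l)}$-norm (given by Theorem~\ref{thm:exist}) to a pointwise space-time bound carrying the spatial weight $\phi^{(1-\varepsilon)\lambda'_l/\lambda}$ by combining Theorem~\ref{2 alpha point quo} with a weighted parabolic maximum principle on the Dirichlet approximations $u_R^{(l)}$, and then to read off the finite mass drop and total curvature as corollaries. Decompose $u^{(l)} = \iota^{(l)}(\bfa) + w^{(l)}$. The linear piece $\iota^{(l)}(\bfa) = \sum_{j \in J^{(l)}} a_j e^{-\lambda'_l t}\phi_j$ satisfies the claimed pointwise bound immediately from Theorem~\ref{2 alpha point quo} after taking $R \to \infty$, using $\phi_{R,j} \to \phi_j$ in $C^{2,\alpha}_{\mathrm{loc}}$ together with the $R$-uniform weighted estimate.

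For the remainder $w^{(l)}$, I would argue by induction on $l$, the induction hypothesis providing the pointwise bounds on $u^{(1)}, \ldots, u^{(l-1)}$. On $\Si_R \times \R_-$, the function $w_R^{(l)}$ solves $\pa_t w_R^{(l)} - \cL w_R^{(l)} = E^{(l)}(w_R^{(l)})$ with zero Dirichlet data, and Lemma~\ref{lem:E}~\eqref{c0} together with the induction hypothesis and the bound on $\iota^{(l)}(\bfa)$ gives
\begin{align*}
|E^{(l)}(w_R^{(l)})|(x,t) \;\lesssim\; |\bfa|^2\, e^{-2\lambda'_l t}\,\phi(x)^{2(1-\varepsilon)\lambda'_l/\lambda} + (\text{strictly faster-decaying terms from levels} < l).
\end{align*}
The core step is to test the weighted barrier $\bar V = A|\bfa|^2 e^{-2\lambda'_l t}\phi^\gamma$ with $\gamma = 2(1-\varepsilon)\lambda'_l/\lambda$. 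Running the $\phi^\gamma$-substitution exactly as in the proof of Lemma~\ref{quo boun} yields
\begin{align*}
(\pa_t - \cL)\bar V \;=\; \bar V\Bigl[-2\varepsilon\lambda'_l + (\gamma-1)|A|^2 + \gamma(1-\gamma)|\D\phi|^2/\phi^2\Bigr],
\end{align*}
and the positive leading constant $-2\varepsilon\lambda'_l$ dominates the other two terms on $\Si \setminus \Si_{R_0}$ for $R_0$ large, by the linear decay of $|A|^2$ (Remark~\ref{A bound g alpha}) and the Harnack estimate (Theorem~\ref{harnack}). On the compact set $\bar\Si_{R_0}$, the crude bound $|w_R^{(l)}| \le \varepsilon_0 e^{\delta_l t}$ from Theorem~\ref{anc R} is absorbed by taking $A$ large. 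The parabolic maximum principle on finite slabs $\Si_R \times [-T, 0]$, together with the terminal constraint $P_R^{(l)}(w_R^{(l)})(\cdot, 0) = 0$ and the $X^{(l)}_R$-decay as $T \to \infty$, yields $|w_R^{(l)}| \le \bar V$ uniformly in $R$. Interior parabolic Schauder after the weighted normalization of Lemma~\ref{lem hat error quo} upgrades to $C^{2,\alpha}$, and $R \to \infty$ closes the induction.

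For the geometric consequences, Theorem~\ref{harnack} and Remark~\ref{A bound g alpha} give the exponential decay $\phi(x) \le C e^{-c\rho(x)}$ at each end, so $\phi(x)^{(1-\varepsilon)\lambda'_l/\lambda} \le C e^{-\varepsilon'_l|x|}$. Since $\hat H = -\pa_t u / V$ and $|\hat A|$ is controlled by $|A_\Si|$ plus first- and second-order derivatives of $u$ from~\eqref{hatA}, the pointwise bound on $u^{(l)}$ controls $H^2$ and $|\hat A|^n$ on $M_t$; pulling back to the asymptotic hyperplane coordinates~\eqref{fun asy 2}--\eqref{fun asy n} reduces the spatial integrals at fixed $t$ to $\int e^{-\varepsilon'_l|y|}\,dy < \infty$, while $\int_{-\infty}^0 e^{-2\lambda'_l t}\,dt < \infty$ since $\lambda'_l < 0$. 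The main obstacle is verifying the supersolution property for $\bar V$ uniformly in $R$: $\varepsilon$ must be chosen small relative to the spectral gaps $\min_l(\lambda'_l - \lambda'_{l-1})$ so that lower-level contributions to $E^{(l)}$ are strictly subdominant, and the terminal condition $P_R^{(l)}(w_R^{(l)})(\cdot, 0) = 0$ forces the maximum principle to be applied on finite slabs with a $T \to \infty$ limiting argument.
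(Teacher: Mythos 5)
Your overall structure parallels the paper's: decompose $u^{(l)}=\iota^{(l)}(\bfa)+w^{(l)}$, bound $\iota^{(l)}$ via Theorem~\ref{2 alpha point quo}, induct on $l$ to control $E^{(l)}$, run a weighted maximum principle on the Dirichlet approximations $w_R^{(l)}$, and close via Schauder and $R\to\infty$. However, there is a genuine gap in the barrier step. You take $\bar V = A|\bfa|^2 e^{-2\lambda'_l t}\phi^\gamma$ with $\gamma = 2(1-\varepsilon)\tfrac{\lambda'_l}{\lambda}$; for $l=1$ this is $\gamma=2(1-\varepsilon)>1$ whenever $\varepsilon<\tfrac12$. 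Your own computation gives
\begin{equation*}
(\pa_t - \cL)\bar V = \bar V\bigl[-2\varepsilon\lambda'_l + (\gamma-1)|A|^2 + \gamma(1-\gamma)|\D\phi|^2/\phi^2\bigr],
\end{equation*}
and for $\gamma>1$ the factor $\gamma(1-\gamma)$ is negative. You argue that $-2\varepsilon\lambda'_l>0$ dominates at infinity because of Remark~\ref{A bound g alpha} and Theorem~\ref{harnack}; but Theorem~\ref{harnack} only gives a uniform \emph{bound} $|\D\log\phi|\le C$, not decay. In fact $|\D\log\phi|$ cannot tend to zero: the exponential decay $\phi\le Ce^{-c\rho}$ forces its average along geodesic rays to be bounded below by $c>0$, and the asymptotic Schr\"odinger analysis gives $|\D\phi|^2/\phi^2 \to -\lambda$ at each end. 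Thus at infinity the zeroth-order coefficient of $\bar V$ is approximately $-2\varepsilon\lambda'_l + \gamma(1-\gamma)(-\lambda)$, which for $l=1$ and small $\varepsilon$ is $\approx 2\lambda<0$. So $\bar V$ is \emph{not} a supersolution outside a compact set, and the maximum principle comparison $|w_R^{(l)}|\le\bar V$ does not go through.

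The paper avoids exactly this problem: it substitutes $\hat w_R = e^{\mu t}w_R/\phi^m$ with $\mu=\lambda'_{l+1}$ (a single rather than a double time exponent) and with $m$ \emph{capped at $1$} --- concretely $m=1$ when $l+1=1$, and $m=(1-\sigma)\lambda'_{l+1}/\lambda<1$ otherwise --- so that the offending term $m(m-1)|\D\phi|^2/\phi^2$ is $\leq 0$ and the zeroth-order coefficient has a good sign outside a compact set. The extra factor $e^{-2\lambda_{R,i_{l+2}-1}t}$ in the final $w_R$ bound is then produced not by the weight, but by the Duhamel integral of the inhomogeneity $e^{\mu s}E^{(l+1)}\phi^{-m}$, which is integrable because $\mu-2\lambda_{R,i_{l+2}-1}>0$. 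To repair your argument you would need to replace $\gamma$ by something like $\min\{1,\,2(1-\varepsilon)\lambda'_l/\lambda\}$ and decouple the time exponent of the weight from that of the source as the paper does; with $\gamma>1$ the supersolution property is irrecoverably lost. The remainder of your write-up (the induction over $l$, the $\phi$-weighted Schauder upgrade via Lemma~\ref{lem hat error quo}, and the reduction of the mass-drop and total-curvature integrals to $\int e^{-\varepsilon'_l|y|}dy<\infty$ and $\int_{-\infty}^0 e^{-2\lambda'_l t}dt<\infty$) is in line with the paper.
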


Before we proceed, let us introduce a notation. For an integer $m\geq 0$ and $\alpha\in (0,1)$, and for any positive real numbers $r,s>0$, we define
\begin{equation}
    \|f\|_{C^{m,\alpha}_{r,s}}(x,t):=\|f\|_{C^{m,\alpha}(B_r(x)\times (t-s,t))}, \quad (x,t)\in \Si\times\mathbb R_-
\end{equation}
for any function $f\in C^{m,\alpha}(\Si\times\mathbb R_-)$. If $x,t$ is fixed, then we simply write $\|f\|_{C^{m,\alpha}_{r,s}}$ for $\|f\|_{C^{m,\alpha}_{r,s}}(x,t).$ 

\begin{lemma}
Let $\{u_R^{(k)}\}_{k=1}^L$ be the functions obtained in Theorem \ref{anc R}, then there exists $R_0>0$ large, such that for any $\varepsilon>0$, there exists $C(\varepsilon,M)>0$ independent of $R\geq R_0$, $(x,t)\in M\times\mathbb R_-$, such that
\begin{equation}\label{v k holder weighted}
    \|u^{(k)}_R\|_{C^{2,\alpha}_{1,1}}(x,t)\leq C|\bfa|e^{-2\lambda_{R,i_{k+1}-1} t}\phi^{(1-\varepsilon)\frac{\lambda_{R,i_{k+1}-1}}{\lambda}}(x),\quad k=0,1,2,\cdots,L,
\end{equation}
where we use the convention that $u_R^{(0)}=0$. 
\end{lemma}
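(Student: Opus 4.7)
My plan is to argue by induction on $k$, with the base case $k=0$ being trivial since $u_R^{(0)}\equiv 0$. For the inductive step, I decompose
\[
u_R^{(k)} = \iota_R^{(k)}(\mathbf{a}) + w_R^{(k)},
\]
where, by \Cref{anc R}, $w_R^{(k)} \in X_R^{(k)}$ vanishes on $\partial\Si_R\times\R_-$ and satisfies the global bound $\|w_R^{(k)}(\cdot,t)\|_{C^{2,\alpha}(\Si_R)} \leq C|\mathbf{a}|\,e^{\delta_k t}$. The task is to upgrade this purely time-weighted bound to a pointwise, $\phi$-weighted one, handling the two pieces separately.

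The explicit piece $\iota_R^{(k)}(\mathbf{a})=\sum_{j\in J^{(k)}} a_j e^{-\lambda_{R,j}t}\phi_{R,j}$ is immediate from \Cref{2 alpha point quo}: each eigenfunction satisfies $\|\phi_{R,j}\|_{C^{2,\alpha}(B_1(x))}\leq C\phi^{(1-\varepsilon)\lambda_{R,j}/\lambda}(x)$, and since $t\leq 0$ and $\phi\to 0$ at infinity, both the time factor $e^{-\lambda_{R,j}t}$ and the space factor $\phi^{(1-\varepsilon)\lambda_{R,j}/\lambda}$ are dominated by the $j=i_{k+1}-1$ term. For the correction $w_R^{(k)}$, I would use its evolution equation $(\partial_t-\mathcal{L})w_R^{(k)} = E^{(k)}(w_R^{(k)})$ together with the fact that $E^{(k)}$ is quadratic near zero. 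Combining the inductive estimate on $\sum_{j<k}u_R^{(j)}$ with the bound just derived for $\iota_R^{(k)}(\mathbf{a})$, via \Cref{lem:E} and \Cref{lem hat error quo}, produces a pointwise H\"older estimate for the source of the form
\[
\|E^{(k)}\|_{C^\alpha_{1,1}}(x,t) \leq C|\mathbf{a}|^2 e^{-2\lambda_{R,i_{k+1}-1}t}\phi^{2(1-\varepsilon)\lambda_{R,i_{k+1}-1}/\lambda}(x).
\]

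To convert this source estimate into a pointwise bound on $w_R^{(k)}$ itself, my plan is a weighted maximum-principle argument in the spirit of the proof of \Cref{quo boun}. Setting
\[
\Phi(x,t) := K|\mathbf{a}|^2 e^{-2\lambda_{R,i_{k+1}-1}t}\phi^{2(1-\varepsilon)\lambda_{R,i_{k+1}-1}/\lambda}(x)
\]
for a large $K$, I compute the parabolic operator applied to the ratio $w_R^{(k)}/\Phi$, using the Harnack bound $|\nabla\log\phi|\leq C$ from \Cref{harnack} and the quadratic decay $|A|^2 = O(|x|^{-2})$ from \Cref{A bound g alpha} to extract a favorable sign for the zeroth-order coefficient outside a large compact core $\Si_{R_0}$. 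On $\Si_{R_0}$, where $\phi$ is bounded away from $0$, the global $X_R^{(k)}$-bound already controls $w_R^{(k)}$ pointwise; on the complement, the parabolic maximum principle, combined with the Dirichlet condition on $\partial\Si_R$ and with the fact that $w_R^{(k)}/\Phi\to 0$ as $t\to-\infty$ (inherited from $w_R^{(k)}\in X_R^{(k)}$), yields the desired pointwise control. A final application of the interior parabolic Schauder estimate then upgrades the $C^0$ bound to $C^{2,\alpha}$, completing the inductive step.

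The hardest part will be verifying that the zeroth-order coefficient in the equation for $w_R^{(k)}/\Phi$ has the right sign uniformly in $R\geq R_0$ for a single choice of $R_0$ independent of $R$; this reduces to a careful variant of the computation in the proof of \Cref{quo boun}, and requires the specific weighted exponent $2(1-\varepsilon)\lambda_{R,i_{k+1}-1}/\lambda$ together with the fact that $|A|^2$ decays quadratically while $|\nabla\log\phi|$ remains bounded. A secondary technical issue is matching the pointwise bounds on $\iota_R^{(k)}(\mathbf{a})$ and $w_R^{(k)}$ so that their sum combines into the single expression in \eqref{v k holder weighted} with a constant $C$ depending only on $\Si$ and $\varepsilon$.
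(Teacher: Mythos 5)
Your overall strategy — decompose $u_R^{(k)}=\iota_R^{(k)}(\mathbf{a})+w_R^{(k)}$, weight $w_R^{(k)}$ by $\phi^m$ and a suitable time exponential, and run a parabolic maximum principle using the sign of the resulting zeroth-order coefficient outside a compact core plus the Harnack bound on $|\nabla\log\phi|$ — is essentially the paper's. The gap is in the step where you claim the source estimate
\[
\|E^{(k)}(w_R^{(k)})\|_{C^\alpha_{1,1}}(x,t) \leq C|\mathbf{a}|^2 e^{-2\lambda_{R,i_{k+1}-1}t}\,\phi^{2(1-\varepsilon)\lambda_{R,i_{k+1}-1}/\lambda}(x)
\]
follows from the inductive bound on $\sum_{j<k}u_R^{(j)}$ together with the bound on $\iota_R^{(k)}(\mathbf{a})$. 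It does not. Expanding $E^{(k)}(w_R^{(k)})=E\bigl(w_R^{(k)}+\iota_R^{(k)}+\sum_{j<k}u_R^{(j)}\bigr)-E\bigl(\sum_{j<k}u_R^{(j)}\bigr)$ and using \eqref{diff holder}, one picks up the term $\|w_R^{(k)}\|_{C^{2,\alpha}_{1,1}}^2$ together with cross-terms $\|w_R^{(k)}\|_{C^{2,\alpha}_{1,1}}\cdot\bigl(\|\iota_R^{(k)}\|_{C^{2,\alpha}_{1,1}}+\|\sum_{j<k}u_R^{(j)}\|_{C^{2,\alpha}_{1,1}}\bigr)$. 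The only available bound on $w_R^{(k)}$ at this point is the purely time-weighted $X_R^{(k)}$-bound $\|w_R^{(k)}(\cdot,t)\|_{C^{2,\alpha}}\leq\varepsilon_0 e^{\delta_k t}$, with no spatial decay. The pure square contributes no $\phi$-weight at all, and the cross terms contribute only a single power $\phi^{(1-\varepsilon)\lambda_{R,i_{k+1}-1}/\lambda}$, not the needed square. So the right-hand side of your claimed source estimate is not available without already knowing the spatial decay of $w_R^{(k)}$ — which is exactly what the lemma is supposed to produce. This is circular.

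The paper breaks the circularity by running the Picard iteration $w_R^{(b+1)}=S(w_R^{(b)})$ from $w_R^{(0)}=0$. At $b=0$ the source $E^{(k)}(w_R^{(0)})$ involves only $\iota_R^{(k)}$ and $\sum_{j<k}u_R^{(j)}$, both of which carry $\phi$-weights (by \Cref{2 alpha point quo} and the induction hypothesis), so the maximum-principle argument gives the $\phi^m$-weighted bound on $w_R^{(1)}$. Only then can one estimate $E^{(k)}(w_R^{(1)})$ with the required spatial decay, bound $w_R^{(2)}$, and so on; the bound is uniform in $b$, and one passes $b\to\infty$ to reach $w_R^{(k)}$. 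Without inserting this iteration (or some other device yielding a priori spatial decay of $w_R^{(k)}$, e.g., a bootstrap on the supremum of $w_R^{(k)}/\phi^m$ that one would first need to show is finite uniformly in $R$), your maximum principle has no controlled source and the argument does not close. A secondary, smaller inaccuracy: on the compact core $\Si_{R_0}$, the $X_R^{(k)}$-bound gives only $e^{\delta_k t}$, which for negative $t$ is \emph{weaker} than the target $e^{-2\lambda_{R,i_{k+1}-1}t}$ (since $\delta_k<-2\lambda_{R,i_{k+1}-1}$ by the choice of $\delta_k$), so the $X_R^{(k)}$-bound does not already give the pointwise control on $\Si_{R_0}$; it is used only to send the boundary term at $t=t_R$ to zero as $t_R\to-\infty$, as you correctly use elsewhere.
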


\begin{proof}
We prove this by induction on $k$. Since $u_R^{(0)}\equiv 0$, \eqref{v k holder weighted} holds for $k=0$. Assuming the statement is true for $k=0,\cdots,l$, we prove it for $k=l+1$.

Recall that $u^{(l+1)}_R=w_R+\iota^{(l+1)}_R(a)$ and $\partial_ t w_R=\mathcal Lw_R+E^{(l+1)}_R(w_R)$,    
where
\begin{align}
    E^{(l+1)}_R(w_R)=E(w_R+\iota ^{(l+1)}_R(\bfa)+\sum_{j=1}^l u^{(j)}_R)-E(\sum_{j=1}^l u^{(j)}_R),
\end{align}
and we find $w_R$ by the contraction mapping $S:\mathcal A:=\{f\in X^{(l+1)}:\|f\|_{X^{(l+1)}}<\varepsilon_0\}\to X^{(l+1)}$, where we use the convention that $E(0)=0$. Writing $w_R^{(b+1)}=S(w_R^{(b)})$, $b=1,2,\cdots$, then $w_R=\lim_{b\to\infty}w_R^{(b)}$.   

We start with $b=0$. Since $w_R$ is unique, we can start with any $w_R^{(0)}\in \mathcal A$. For simplicity, we let $w_R^{(0)}=0$. Then
\begin{align}
    E^{(l+1)}_R(w_R^{(0)})
     =E(\iota ^{(l+1)}_R(\bfa)+\sum_{j=1}^l u^{(j)}_R)-E(\sum_{j=1}^l u^{(j)}_R).
\end{align}
By \eqref{diff holder} in Lemma \ref{lem:E}, we have
\begin{align}
     \|E^{(l+1)}_R(w_R^{(0)})\|_{C^\alpha_{1,1}}
     &\leq C\|\iota ^{(l+1)}_R(\bfa)\|_{C^{2,\alpha}_{1,1}}(\|\iota ^{(l+1)}_R(\bfa)\|_{C^{2,\alpha}_{1,1}}+\|\sum_{j=1}^l u^{(j)}_R\|_{C^{2,\alpha}_{1,1}})\\
     &=C\|\iota ^{(l+1)}_R(\bfa)\|^2_{C^{2,\alpha}_{1,1}}+C\|\iota ^{(l+1)}_R(\bfa)\|_{C^{2,\alpha}_{1,1}}\|\sum_{j=1}^l u^{(j)}_R\|_{C^{2,\alpha}_{1,1}}=:I_1+I_2.
\end{align}
 Note that $J_R^{(l+1)}=J^{(l+1)}=\{i_{l+1},i_{l+1}+1,\cdots,i_{l+2}-1\}$ and $\lambda_{R,i}\in(\lambda'_{l+1}-\epsilon,\lambda'_{l+1}+\epsilon)$ ($i\in J_R^{(l+1)}$) for sufficiently large $R$. By Theorem \ref{2 alpha point quo}, we obtain
 \begin{equation}\label{iota1}
 \begin{aligned}
     I_1\leq C|\bfa|^2e^{-2\lambda_{R,i_{l+2}-1} t}\sum_{i\in J^{(l+1)}}\|\phi_{R,i}\|^2_{C^{2,\alpha}(B_1(x))}
     \leq C|\bfa|^2e^{-2\lambda_{R,i_{l+2}-1} t}\phi^{2(1-\varepsilon)\frac{\lambda_{R,i_{l+2}-1}}{\lambda}}(x).
 \end{aligned}  
 \end{equation}
 On the other hand, by the induction hypothesis \eqref{v k holder weighted} for $k=1,2\cdots,l$, we have
\begin{equation}\label{iota1 u0}
 \begin{aligned}
     I_2
     \leq&C|\bfa|^2e^{-(\lambda_{R,i_{l+2}-1}+\lambda_{R,i_{l+1}-1}) t}\sum_{i\in J^{(l+1)}}\|\phi_{R,i}\|_{C^{2,\alpha}(B_1(x))}\sum_{i=1}^{i_{l+1}-1}\phi^{(1-\varepsilon)\frac{\lambda_{R,i}}{\lambda}}(x)\\
     \leq &C|\bfa|^2e^{-(\lambda_{R,i_{l+2}-1}+\lambda_{R,i_{l+1}-1}) t}\phi^{(1-\varepsilon)\frac{\lambda_{R,i_{l+2}-1}+\lambda_{R,i_{l+1}-1}}{\lambda}}(x).
 \end{aligned}  
 \end{equation}
Therefore, it follows from $\lambda_{R,i_{l+1}-1}<\lambda_{R,i_{l+2}-1}<0$ that
\begin{equation}\label{error l phi holder}
     \|E^{(l+1)}_R(w_R^{(0)})\|_{C^{\alpha}_{1,1}}\leq C|\bfa|^2e^{-2\lambda_{R,i_{l+2}-1} t}\phi^{2(1-\varepsilon)\frac{\lambda_{R,i_{l+2}-1}}{\lambda}}(x).
 \end{equation}

\bigskip

Now we derive the H\"older type estimate (\ref{v k holder weighted}) for $w^{(1)}_R$. Set $\psi=\phi^m$, for $m>0$ to be determined. Then we have
\begin{equation}
    \begin{aligned}
        \nabla\psi&=m\phi^{m-1}\nabla\phi,\\
        \Delta\psi&=m(m-1)\phi^{m-2}|\nabla\phi|^2+m\phi^{m-1}\Delta\phi=m(m-1)\phi^{m-2}|\nabla\phi|^2-m\phi^{m}(|A|^2+\lambda)\\
        &=\tfrac{m-1}{m}\tfrac{|\nabla\psi|^2}{\psi}-m(|A|^2+\lambda)\psi.
    \end{aligned}
\end{equation}
Let $\hat w_R^{(b+1)}=e^{\mu t}\frac{w_R^{(b+1)}}{\psi}$ ($\mu>0$ to be determined), we obtain
\begin{align}
    \hat w^{(b+1)}_{R, t}&=\mu \hat w^{(b+1)}_R+e^{\mu t}\tfrac{w_{R, t}^{(b+1)}}{\psi}=\mu\hat w_R^{(b+1)}+e^{\mu t}\tfrac{\mathcal Lw_R^{(b+1)}+E^{(l+1)}(w_R^{(b)})}{\psi}\\
    &=\mu\hat w_R^{(b+1)}+\tfrac{(\Delta+|A|^2)(\psi\hat w_R^{(b+1)})}{\psi}+e^{\mu t}\tfrac{E^{(l+1)}(w_R^{(b)})}{\psi}\\
    &=\Delta \hat w_R^{(b+1)}+2\tfrac{\nabla\psi}{\psi}\cdot\nabla\hat w_R^{(b+1)}+(\mu+(1-m)|A|^2-m\lambda+\tfrac{m-1}{m}\tfrac{|\nabla\psi|^2}{\psi^2})\hat w_R^{(b+1)}+e^{\mu t}\tfrac{E^{(l+1)}(w_R^{(b)})}{\psi}.
\end{align}

We take $\mu=\lambda'_{{l+1}}$ and separate into two cases:
\\
{\bf Case 1:} $l+1=1$. We take $m=1< 2(1-\varepsilon)\frac{\lambda_{R,1}}{\lambda_1}$ (for $R\geq R_0$ large). Then $\mu=\lambda_1=\lambda$, and
\begin{equation}
      \hat{w}_{R, t}^{(b+1)}=\Delta \hat w_R^{(b+1)}+2\tfrac{\nabla\psi}{\psi}\cdot\nabla\hat w_R^{(b+1)}+e^{\mu t}\tfrac{E^{(l+1)}(w_R^{(b)})}{\phi^{m}}.
\end{equation}
\\
{\bf Case 2:} $l+1\geq 2$. We take  $0<\sigma<1$, $R\geq R_0$ large, such that $(1-\varepsilon)\frac{\lambda_{R,i_{l+2}-1}}{\lambda}\leq m:=(1-\sigma)\frac{\lambda'_{l+1}}{\lambda}<\min\{2(1-\varepsilon)\frac{\lambda_{R,i_{l+2}-1}}{\lambda},1\}$. Then 
\begin{equation}
    \begin{aligned}
        \hat{w}_{R, t}^{(b+1)}=\Delta \hat w_R^{(b+1)}+2\tfrac{\nabla\psi}{\psi}\cdot\nabla\hat w_R^{(b+1)}+(\sigma\lambda'_{l+1}+(1-m)|A|^2-m(1-m)\tfrac{|\nabla\phi|^2}{\phi^2})\hat w_R^{(b+1)}+e^{\mu t}\tfrac{E^{(l+1)}(w_R^{(b)})}{\phi^{m}}.
    \end{aligned}
\end{equation}
Due to the fact that $m\in [0,1]$, $\sigma\in (0,1)$, $\lambda'_{l+1}<0$, and the quadratic decay of $|A|$ by Remark \ref{A bound g alpha}, we have for $R_0$ large,
$$\sigma\lambda'_{{l+1}}+(1-m)|A|^2-m(1-m)\tfrac{|\nabla\phi|^2}{\phi^2}\leq 0,\quad x\in \Si\setminus \Si_{R_0}$$
In summary, we have for both cases
\begin{equation}\label{El+10}
    \hat{w}_{R, t}^{(b+1)}=\Delta \hat w_R^{(b+1)}+2\tfrac{\nabla\psi}{\psi}\cdot\nabla\hat w_R^{(b+1)}+c_k\hat w_R^{(b+1)}+e^{\mu t}\tfrac{E^{(l+1)}(w_R^{(b)})}{\phi^{m}},\quad x\in \Si\setminus \Si_{R_0}.
\end{equation}
with $c_k\leq 0$ on $\Si\setminus \Si_{R_0}$.

Applying Lemma \ref{lem hat error quo} with $E=E^{(l+1)}(w_R^{(0)})$, using \eqref{error l phi holder}, and note that $m\leq 2(1-\varepsilon)\frac{\lambda_{R,i_{l+2}-1}}{\lambda}$, we have 
\begin{equation}
    \|E^{(l+1)}\phi^{-m}(s)\|_{C^0_{1,1}}(x,t)\leq  C|\bfa|^2 e^{-2\lambda_{R,i_{l+2}-1} t}\phi^{2(1-\varepsilon)\frac{\lambda_{R,i_{l+2}-1}}{\lambda}-m}(x)\leq C|\bfa|^2 e^{-2\lambda_{R,i_{l+2}-1} t}
\end{equation}
for $C$ independent of $(x,t)\in M\times\mathbb R_-$.

Now, fix $ t\leq 0$. The maximum principle, and the fact that $w_R^{(1)}=\hat w_R^{(1)}e^{-\mu t}\psi\in X^{(l+1)}$ show that for $ t_R\leq  t$,
\begin{equation}\label{hat sup l}
\begin{aligned}
    \|\hat{w}_R^{(1)}( t)\|_{L^\infty(\Si_R)}
    \leq& \|\hat w_R^{(1)}( t_R)\|_{L^\infty(\Si_R)}
     + \int_{ t_R}^{ t} e^{\mu s} \|E^{(l+1)}\phi^{-m}(s)\|_{{L^\infty}(\Si_R)} ds,\\
     \leq& C\tfrac{e^{(\lambda'_{{l+1}}+\delta_{l+1}) t_R}}{c_R}+\int_{-\infty}^{ t} C|\bfa|^2e^{(\lambda'_{{l+1}}-2\lambda_{R,i_{l+2}-1})s}ds\\
     \leq& \tfrac{e^{(\lambda'_{{l+1}}+\delta_{l+1}) t_R}}{c_R}+C|\bfa|^2e^{(\lambda'_{{l+1}}-2\lambda_{R,i_{l+2}-1}) t},
\end{aligned}
\end{equation}
where $c_R=\inf_{\Si_R}\psi=\inf_{\Si_R}\phi^m>0$. Since $\delta_{l+1}>-\lambda'_{{l+1}}$, letting $ t_R\to-\infty$, we get
\begin{equation}
      \|\hat{w}_R( t)\|_{L^\infty(\Si_R)}\leq C|\bfa|^2e^{(\lambda'_{{l+1}}-2\lambda_{R,i_{l+2}-1}) t}.
\end{equation}
Thus
\begin{align}
    \|w_R^{(1)}\|_{C^0_{1,1}}=&\|\hat w_R^{(1)}\psi e^{-\mu t}\|_{C^0_{1,1}}\leq C|\bfa|^2e^{(\lambda'_{l+1}-2\lambda_{R,i_{l+2}-1}-\mu) t}\psi(x)e^{\sup|\nabla\log \psi|}\leq C|\bfa|^2e^{-2\lambda_{R,i_{l+2}-1} t}\psi(x),
\end{align}
where we used Theorem \ref{harnack} and that $|\nabla\log\psi|=m|\nabla\log\phi|\leq|\nabla\log\phi|\leq C$ since $m\in[0,1]$. Since $\partial_ t w_R^{(1)}=\mathcal Lw_R^{(1)}+E^{(l+1)}_R(w_R^{(0)})$, the interior Schauder estimate shows that there exists $C$ independent of $(x,t)\in M\times\mathbb R_-$ such that
\begin{equation}\label{w1 inter point  schau}
\begin{aligned}
     \|w_R^{(1)}\|_{C^{2,\alpha}_{\frac{1}{2},\frac{1}{2}}}\leq &C(\|w_R^{(1)}\|_{C^0_{1,1}}+\|E^{(l+1)}(w^{(0)}_R)\|_{C^\alpha_{1,1}})\\
     \leq&C(|\bfa|^2e^{-2\lambda_{R,i_{l+2}-1} t}\psi(x)+|\bfa|^2e^{-2\lambda_{R,i_{l+2}-1} t}\phi^{2(1-\varepsilon)\frac{\lambda_{R,i_{l+2}-1}}{\lambda}}(x))\\
    \leq& C|\bfa|^2e^{-2\lambda_{R,i_{l+2}-1} t}\phi^m(x)
\end{aligned}
\end{equation}
since $m\leq 2(1-\varepsilon)\frac{\lambda_{R,i_{l+2}-1}}{\lambda}$  by the definition of $m$ and $\phi(x)$ is bounded. Since $C$ is independent of $(x,t)\in M\times\mathbb R_-$ in \eqref{w1 inter point  schau}, we can cover $B(x,1)\times(t-1,1)$ by balls $\{(z,\frac{1}{2})\times (s-\frac{1}{2},s)\}_{(z,s)\in B(x,1)\times (t-1,t)}$, and get
\begin{equation}
\begin{aligned}
    \|w_R^{(1)}\|_{C^{2,\alpha}_{1,1}}(x,t)\leq& \sup_{(z,s)\in B(x,1)\times (t-1,t)}\|w_R^{(1)}\|_{C^{2,\alpha}_{\frac{1}{2},\frac{1}{2}}}(z,s)\leq \sup_{(z,s)\in B(x,1)\times (t-1,t)} C|\bfa|^2e^{-2\lambda_{R,i_{l+2}-1} s}\phi^m(z)\\
    \leq& C |\bfa|^2e^{-2\lambda_{R,i_{l+2}-1} t}\phi^m(x),
\end{aligned}
\end{equation}
where in the last inequality, we use $m\in [0,1]$ and the Harnack inequality for $\phi$ in Theorem \ref{harnack} to get $\phi(z)\leq C\phi(x)$ for $z\in B(x,1)$.

Repeating this process with $b=1$, we have 
\begin{align}
     E^{(l+1)}_R(w_R^{(1)})=&E(w_R^{(1)}+\iota ^{(l+1)}_R(\bfa)+\sum_{j=1}^l u^{(j)}_R)-E(\sum_{j=1}^l u^{(j)}_R).
\end{align}
Observe that
\begin{align}
\|\sum_{j=1}^l u^{(j)}_R\|^2_{C^{2,\alpha}_{1,1}}
&\le
C|\bfa|^2e^{-2\lambda_{R,i_{l+1}-1} t}\phi^{2(1-\varepsilon)(\frac{\lambda_{R,i_{l+1}-1}}{\lambda})}
\\
\|w_R^{(1)}\|^2_{C^{2,\alpha}_{1,1}}
&\le
C|\bfa|^4e^{-4\lambda_{R,i_{l+2}-1} t}\phi^{2m}(x).
\end{align}
By \eqref{diff holder} in Lemma \ref{lem:E} and Theorem \ref{2 alpha point quo}, we have
\begin{align}
     \|E^{(l+1)}_R(w_R^{(1)})\|_{C^\alpha_{1,1}}
     &\leq C|\bfa|^4e^{-4\lambda_{R,i_{l+2}-1} t}\phi^{2m}(x)+C|\bfa|^2e^{-2\lambda_{R,i_{l+2}-1} t}\phi^{2(1-\varepsilon)\frac{\lambda_{R,i_{l+2}-1}}{\lambda}}(x)
     \\
     &\leq
     C|\bfa|^2e^{-2\lambda_{R,i_{l+2}-1} t}\phi^{2(1-\varepsilon)\frac{\lambda_{R,i_{l+2}-1}}{\lambda}}(x),
\end{align}
since $m\geq (1-\varepsilon)\frac{\lambda_{R,i_{l+2}-1}}{\lambda}$. 

Using the same approach as above, we can obtain
\begin{align}
     \|w_R^{(b)}\|_{C^{2,\alpha}_{1,1}}\leq C|\bfa|^2e^{-2\lambda_{R,i_{l+2}-1} t}\phi^m(x),\quad b=1,2,3,\cdots.
\end{align}
Taking $b\to\infty$, we get
\begin{equation}\label{wr schau phi}
     \|w_R\|_{C^{2,\alpha}_{1,1}}\leq C|\bfa|^2e^{-2\lambda_{R,i_{l+2}-1} t}\phi^m(x).
\end{equation}
Note that
\begin{equation}
\begin{aligned}
\|\iota^{(l+1)}_R(\bfa)\|_{C^{2,\alpha}_{1,1}}
\leq& C|\bfa|\sum_{i\in J^{(l+1)}}e^{-\lambda_{R,i_{l+2}-1} t}\|\phi_{R,i}\|_{C^{2,\alpha}(B_1(x))}
\leq C|\bfa|e^{-\lambda_{R,i_{l+2}-1} t}\phi^{(1-\varepsilon)\frac{\lambda_{R,i_{l+2}-1}}{\lambda}}(x).
\end{aligned} 
\end{equation}
These together give that
\begin{equation}
\begin{aligned}
   \|u^{(l+1)}_R\|_{C^{2,\alpha}_{1,1}}=\|\iota_R^{(l+1)}+w_R^{(b)}\|_{C^{2,\alpha}_{1,1}}
    \leq C|\bfa|e^{-\lambda_{R,i_{l+2}-1} t}\phi^{(1-\varepsilon)\frac{\lambda_{R,i_{l+2}-1}}{\lambda}}(x),
\end{aligned} 
\end{equation}
since $m\geq (1-\varepsilon)\frac{\lambda_{R,i_{l+2}-1}}{\lambda}$. 
\end{proof}
\begin{proof}[Proof of \Cref{thm:decay and finite mass}]
Let $R\to\infty$ in \eqref{v k holder weighted}. We get that 
\begin{equation}
    \|u^{(k)}\|_{C^{2,\alpha}(B_1(x)\times ( t-1, t))}\leq C|\bfa|e^{-\lambda'_k t}\phi(x)^{(1-\varepsilon)\frac{\lambda'_k}{\lambda}},\quad(x, t)\in \Si\times\mathbb R_-, \quad k=1,2,\cdots, L.
\end{equation}
Then we need to show 
\begin{align}
\Vol(M_{-\infty})- \Vol(M_0)=\int_{-\infty}^0\int_{M_ t} H^2 dvol_{M_ t}d t<\infty,
\end{align}
and thus the volume relation is valid conceptually.

In fact, recall that
\begin{align}
\pa_ t u = -VH,
\end{align}
where $V:= \inn{\nu}{\hat\nu}^{-1}\geq1$. Thus, on each end $M_{t,i}$ of $M_t$,
\begin{equation}\label{H squ 1}
\begin{aligned}
     \int_{-\infty}^0\int_{M_{t,i}}  H^2 dvol_{M_ t}d t&\leq \int_{-\infty}^0\int_{\mathbb R^n} u_ t^2 \det(\hat{g}_{ij})dy d t
     \\
     &\leq\int_{-\infty}^0\int_{\mathbb R^n} u_ t^2 \det (g_{ij}+2uh_{ij}+u^2 h_i^kh_{kj}+ \D_iu\D_ju)dy d t.  
\end{aligned}
\end{equation}
Since $g_{ij}$ converges to the standard metric in $C^{3,\alpha}$ as $|y|\to\infty$ by Remark \ref{A bound g alpha}, the distance $\rho(y)$ on $\Si$ and $|y|$ on $\mathbb{R}^n$ are equivalent. By the exponential decay of $\phi$ which says that $0<\phi(x)<Ce^{-c(\Si)\rho(x)}$ for some $c(\Si)>0$, we have on each end of $M_ t$, 
\begin{equation}
    \begin{aligned}
\phi^{2(1-\varepsilon)\frac{\lambda'_k}{\lambda}}(y)
\leq e^{-2(1-\varepsilon)c(\Si)\frac{\lambda'_k}{\lambda}\rho(y)}
    \leq Ce^{-\varepsilon'_k|y|},
    \end{aligned}
\end{equation}
for some $\varepsilon_k'>0$. 

Therefore, (\ref{H squ 1}) and \eqref{uk alpha} imply that
\begin{equation}
    \begin{aligned}
        \int_{-\infty}^0\int_{M_{t,i}}  H^2 dvol_{M_ t}d t
    \leq &C|\bfa|^2\int_{-\infty}^0\int_{\mathbb{R}^n} \sum_{k=1}^{L}e^{-2\lambda'_k t}e^{-\varepsilon'_k|y|}dy  d t    <\infty.
    \end{aligned}
\end{equation}
 The finiteness of total curvature can be derived similarly from \eqref{uk alpha} and \eqref{hatA}.   
\end{proof} 

\subsection{Mean convexity and asymptotic decay}
The next step is to prove the positivity of $u$ and $-\hat H$ if $P_-u=a_1e^{-\lambda_1 t}\phi_1(x)$ with $a_1>0$. 
\begin{theorem}\label{thm:speed}
Let $\mathbf{a}=(a_1,0,\ldots,0)$ with $a_1>0$. If $a_1 <\e_1$ for some $\e_1(\Si)>0$ small, then the solution $u=u(\mathbf{a})$ constructed in \eqref{thm:exist} and the mean curvature $-H$ of $M_t$ both are positive on $\Si\times \mathbb R_-$. Moreover, 
\begin{align}
    \|u-a_1e^{-\lambda_{1} t}\phi\|_{C^{2,\alpha}(B_1(x)\times ( t-1, t))}\leq C a_1^2 e^{-2\lambda_{1} t}\phi(x),
    \quad \text{in } \Si\times \mathbb{R}_-.
\end{align}
\end{theorem}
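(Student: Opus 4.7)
The plan is to reduce the entire statement to a sharp pointwise bound on the error $w := u - a_1 e^{-\lambda_1 t}\phi$, and then derive both assertions via smallness arguments in $a_1$. First, since $\bfa = (a_1, 0, \ldots, 0)$, the function $\iota^{(l)}(\bfa)$ vanishes identically for every $l \geq 2$; hence $w \equiv 0$ solves the fixed-point equation defining $u^{(l)}$, and the uniqueness statement in Theorem \ref{thm:exist} forces $u^{(l)} \equiv 0$ for all $l \geq 2$. Consequently $u = u^{(1)} = a_1 e^{-\lambda_1 t}\phi + w$.

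The main technical point is to establish
\begin{equation*}
\|w\|_{C^{2,\alpha}(B_1(x)\times(t-1,t))} \leq C a_1^2 e^{-2\lambda_1 t}\phi(x),
\end{equation*}
with the sharp linear power of $\phi$ (rather than $\phi^{1-\varepsilon}$). This can be obtained by specializing the proof of Theorem \ref{thm:decay and finite mass} to the case $l+1 = 1$: Case 1 there allows the choice $m = 1$ in the weight $\psi = \phi^m$, and the weighted maximum-principle/Schauder iteration starting from $w_R^{(0)} = 0$ yields \eqref{wr schau phi} in the form $\|w_R\|_{C^{2,\alpha}_{1,1}} \leq C a_1^2 e^{-2\lambda_{R,i_2-1} t}\phi(x)$. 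Since the first eigenvalue on a connected $\Si$ is simple (so $i_2 - 1 = 1$ and $\lambda_{R,1} \searrow \lambda_1$), passing to the limit $R \to \infty$ recovers the claimed estimate.

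Given this bound, both assertions follow quickly. For positivity, the triangle inequality gives $u \geq a_1 e^{-\lambda_1 t}\phi(1 - C a_1 e^{-\lambda_1 t})$. Since $\lambda_1 < 0$ and $t \leq 0$ together force $e^{-\lambda_1 t} \leq 1$, choosing $a_1 < 1/(2C)$ yields $u > 0$ on $\Si \times \mathbb{R}_-$. For mean convexity, the MCF equation $\partial_t u = -V\hat H$ with $V > 0$ shows that $-\hat H > 0$ is equivalent to $\partial_t u > 0$. Since the $C^{2,\alpha}$-norm controls $\partial_t$ (the index set $i + 2j \leq 2$ contains $(i,j) = (0,1)$), the estimate above also gives $|\partial_t w| \leq C a_1^2 e^{-2\lambda_1 t}\phi$, and hence
\begin{equation*}
\partial_t u = a_1 |\lambda_1| e^{-\lambda_1 t}\phi + \partial_t w \geq a_1 e^{-\lambda_1 t}\phi \bigl(|\lambda_1| - C a_1 e^{-\lambda_1 t}\bigr) > 0
\end{equation*}
for $a_1$ sufficiently small.

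The main obstacle is the sharp $\phi$-weight (rather than $\phi^{1-\varepsilon}$) in the bound on $w$. This cannot be read off directly from Theorem \ref{thm:decay and finite mass} as stated, but requires running the decay iteration with the specific exponent $m = 1$ and invoking the sharper estimate \eqref{eq:phi2a} for $\phi$ itself in place of the weaker bound of Theorem \ref{2 alpha point quo} for general eigenfunctions. Once this is done, the positivity and mean convexity assertions reduce to standard barrier-type arguments controlled by the smallness of $a_1$.
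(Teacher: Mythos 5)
Your proof is correct and follows essentially the same route as the paper: both reduce the statement to the sharp weighted estimate \eqref{wr schau phi} with $l+1=1$, $m=1$, then derive positivity of $u$ and of $\partial_t u$ (hence mean convexity via $\hat H=-V^{-1}\partial_t u$) by absorbing the quadratic error into the linear term for $a_1$ small. Your added remark that $\iota^{(l)}(\mathbf a)=0$ for $l\geq 2$ forces $u^{(l)}\equiv 0$ by uniqueness, so that $u=u^{(1)}$, makes explicit a step the paper leaves implicit, but the substance of the argument is the same.
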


\begin{proof}
By our construction of $u_R$ and \eqref{wr schau phi} with $l+1=1$, $m=1$, we have for some $C$ independent of $R>R_0$ and $(x,t)\in\Sigma\times\mathbb R_-$,
\begin{align}
    u_R=&a_1e^{-\lambda_{R,1} t}\phi_{R,1}+w_R\geq a_1e^{-\lambda_{R,1} t}\phi_{R,1}-Ca_1^2e^{-2\lambda_{R,1} t}\phi(x), \quad \text{in } \Si\times \mathbb R_-.
\end{align}
Taking $R\ra \infty$, we obtain that if $a_1\ll 1$, we get 
\begin{align}\label{eq:u>0R}
    u
    \geq (a_1-C a_1^2e^{-\lambda_1 t})e^{-\lambda_{1} t}\phi>0 \qu\text{in } \Si\ti \mathbb R_-.
\end{align}

To show that the mean curvature has a sign, by differentiating $u_R$ with respect to $ t$, we see that
\begin{align}\label{eq:u_tR}
\pa_ t u_R&=(-\la_{R,1})a_1e^{-\lambda_{R,1} t}\phi_{R,1}+\partial_ t w_R\geq -\lambda_{R,1}a_1e^{-\lambda_{R,1} t}\phi_{R,1}-Ca_1^2e^{-2\lambda_{R,1} t}\phi(x) \text { in }\Si\times\mathbb R_-.
\end{align}
by \eqref{wr schau phi}. Taking the limit $R\ra \infty$, we obtain 
\begin{align}
\pa_ t u  \ge (-\la_1a_1 -Ca_1^2e^{-\lambda_{1} t})e^{-\la_1 t} \phi>0\text { in }\Si\times\mathbb R_-.
\end{align}
if we choose $a_1<\e_1\ll 1$. Thus $H=\tfrac{-\partial_ t u}{V}<0$ since $V>1$.

For the last assertion, we observe that
\begin{align}
    \|u_R-a_1e^{-\lambda_{R,1} t}\phi_{R,1}\|_{C^{2,\alpha}_{1,1}}= \|w_R\|_{C^{2,\alpha}_{1,1}}\leq C a_1^2 e^{-2\lambda_{R,1} t}\phi(x)
    \quad \text{in } \Si_R\times \mathbb R_-,
\end{align}
by \eqref{wr schau phi}. Taking $R\ra \infty$, we get 
\begin{align}
    \|u-a_1e^{-\lambda_{1} t}\phi\|_{C^{2,\alpha}_{1,1}}\leq C a_1^2 e^{-2\lambda_{1} t}\phi(x)
    \quad \text{in } \Si\times \mathbb R_-.
\end{align}
\end{proof}

\bigskip

\subsection*{Acknowledgments}
KC has been supported by the KIAS Individual Grant MG078902, an Asian Young Scientist Fellowship, and the National Research Foundation(NRF) grant funded by the Korea government(MSIT) (RS-2023-00219980).
JH has been supported by the KIAS Individual Grant MG088501.
TL has been supported by the NRF grant funded by the Korea government (MSIT) (No. RS-2023-00211258).

\end{document}